\patchcmd{\@begintheorem}{\textit}{\textbf}{}{}
\DeclareMathOperator*\lowlim{\underline{lim}}
\DeclareMathOperator*\uplim{\overline{lim}}
\newcommand{\Real}{\mathbb{R}}
\newcommand{\Tra}{^{\sf T}} % Transpose
\newcommand{\Kron}{\otimes} %Kronecker
\newcommand{\Sec}[1]{\hyperref[sec:#1]{Section~\ref*{sec:#1}}} %section
\newcommand{\App}[1]{\hyperref[sec:#1]{Appendix~\ref*{sec:#1}}} %appendix
\newcommand{\Supp}[1]{\hyperref[sec:#1]{Supplement~\ref*{sec:#1}}} %supplement
\newcommand{\Eqn}[1]{\hyperref[eq:#1]{{\rm (\ref*{eq:#1})}}} %equation
\newcommand{\Part}[1]{\hyperref[part:#1]{(\ref*{part:#1})}} %part of theorem
\newcommand{\Fig}[1]{\hyperref[fig:#1]{Figure~\ref*{fig:#1}}} %figure
\newcommand{\Tab}[1]{\hyperref[tab:#1]{Table~\ref*{tab:#1}}} %table
\newcommand{\Thm}[1]{\hyperref[thm:#1]{Theorem~\ref*{thm:#1}}} %theorem
\newcommand{\Lem}[1]{\hyperref[lem:#1]{Lemma~\ref*{lem:#1}}} %lemma
\newcommand{\Prop}[1]{\hyperref[prop:#1]{Proposition~\ref*{prop:#1}}} %proposition
\newcommand{\Cor}[1]{\hyperref[cor:#1]{Corollary~\ref*{cor:#1}}} %corollary
\newcommand{\Def}[1]{\hyperref[def:#1]{Definition~\ref*{def:#1}}} %definition
\newcommand{\Alg}[1]{\hyperref[alg:#1]{Algorithm~\ref*{alg:#1}}} %algorithm
\newcommand{\Ex}[1]{\hyperref[ex:#1]{Example~\ref*{ex:#1}}} %example
\newcommand{\As}[1]{\hyperref[as:#1]{Assumption~{\rm\ref*{as:#1}}}} %assumption
\newcommand{\Reg}[1]{\hyperref[as:#1]{Condition~\ref*{reg:#1}}} %regularity condition
\newcommand{\AlgLine}[2]{\hyperref[alg:#1]{line~\ref*{line:#2} of Algorithm~\ref*{alg:#1}}}
\newcommand{\AlgLines}[3]{\hyperref[alg:#1]{lines~\ref*{line:#2}--\ref*{line:#3} of Algorithm~\ref*{alg:#1}}}
\theoremstyle{definition}
\newtheorem{proposition}{Proposition}[section]
\newtheorem{theorem}{Theorem}[section]
\newtheorem{lemma}{Lemma}[section]
\newtheorem{definition}{Definition}
\newtheorem{remark}{Remark}
\begin{document}

\title{Anderson Accelerated Operator Splitting Methods for Convex-nonconvex Regularized Problems}

\author{Qiang Heng, Xiaoqian Liu, and Eric C. Chi

\thanks{Manuscript created December 2024. }
\thanks{Qiang Heng is with the School of Mathematics, Southeast University. Xiaoqian Liu is with the Department of Statistics, 
University of California, Riverside. Eric C. Chi is with the Department of Statistics, Rice University. The first two authors contributed equally to this article. }
}

\maketitle

\begin{abstract}
Convex-nonconvex (CNC) regularization is a novel paradigm that employs a nonconvex penalty function while maintaining the convexity of the entire objective function. It has been successfully applied to problems in signal processing, statistics, and machine learning. % Despite the prevalent use of forward-backward splitting in solving CNC problems, there has been limited exploration of alternative operator splitting methods and acceleration techniques. Motivated by this gap in the literature, 
Despite its wide application, the computation of CNC regularized problems remains challenging and under-investigated.
To fill the gap, we study several operator splitting methods and their Anderson accelerated counterparts for solving least squares problems with CNC regularization. %  we introduce a unified framework of Anderson acceleration that encompasses multiple prevalent operator splitting methods, enabling the efficient solution of diverse CNC problems. 
We establish the global convergence of the proposed algorithm to an optimal point and demonstrate its practical speed-ups in various applications. 
%The developed algorithmic framework applies to a wide range of problems beyond CNC regularized problems.
\end{abstract}

\begin{IEEEkeywords}
Operator splitting, Anderson acceleration, fixed-point iteration, convex optimization,  nonconvex regularization
\end{IEEEkeywords}

\section{Introduction}
A core task in signal processing, statistics, and machine learning is to solve a regularized least squares problem:
\begin{equation}
\label{eq:RLS}
    \underset{x\in \Real^p}{\min}\quad \frac{1}{2}\lVert y-Ax \rVert_2^2 + \lambda \psi(x),
\end{equation}
where $A$ is a matrix and $\psi$ is a regularizer that penalizes deviations away from a desired solution structure. The nonnegative tuning parameter $\lambda$ trades off the emphasis on model fit, quantified by the quadratic data fidelity term, and the model structure imposed by the regularizer $\psi$.

Convex regularizers such as the $\ell_1$-norm \cite{tibshirani1996regression}, total variation seminorm \cite{rudin1992nonlinear}, and the nuclear norm \cite{mazumder2010spectral} are the typical choice of a diverse array of classic signal processing and machine learning problems. Moreover, when $\psi$ is convex,  problem \Eqn{RLS} is convex and often admits scalable iterative algorithms with  convergence guarantees to global minima. Using a convex regularizer $\psi$, however, model parameters with large magnitudes are underestimated. This underestimation is referred to as ``shrinkage" bias in statistics. By contrast, nonconvex regularizers such as the smoothly clipped absolute deviation (SCAD) \cite{fan2001variable} and minimax concave penalty (MCP) \cite{zhang2010nearly} introduce less shrinkage bias while simultaneously producing sparser solutions. Employing a nonconvex regularizer, however, typically renders problem \Eqn{RLS} nonconvex. Unfortunately, iterative algorithms are typically no longer guaranteed to converge to global minima on nonconvex problems.

Convex-nonconvex (CNC) regularization, as a middle ground, is a way to leverage the strengths of convex and nonconvex regularizers while simultaneously mitigating their weaknesses\cite{blake1987visual, nikolova1998estimation,selesnick2014, selesnick2014convex,lanza2016convex, lanza2017nonconvex,selesnick2017sparse}. The idea is to design a nonconvex regularizer in a way that preserves the convexity of the objective function. %The CNC strategy was first introduced in 

In this paper, we focus on a popular formulation of CNC regularization proposed in \cite{selesnick2017sparse}.
\begin{equation}
\label{eq:cost}
\underset{x\in \Real^p}{\min}\quad h(x) = \frac{1}{2}\lVert y-Ax \rVert_2^2 + \lambda \psi_B(x),
\end{equation}
where
\begin{equation}
\label{eq:psiBdef}
\psi_B(x) = \rho(x) - \underset{v\in \Real^p}{\min}\;\left\{\rho(v) + \frac{1}{2}\lVert B(x-v)\rVert_2^2\right\}.
\end{equation}
Here, $\rho(x)$ is a convex function, and $B\in \Real^{n\times p}$ is a tuning parameter matrix specified by the user. The regularizer $\psi_B(x)$ is typically nonconvex, but the objective function $h(x)$ remains convex
when $B$ satisfies 
%\begin{equation}\label{eq:convexcond}
$A^\top A \succeq \lambda B^\top B$ \cite{selesnick2017sparse}.
%\end{equation}
For instance, %$h(x)$ is convex if
\begin{equation}\label{eq:setB}
B = \sqrt{{\gamma}/{\lambda}}A,
\end{equation}
for some $\gamma \in [0, 1]$. Alternative strategies for setting $B$ in the context of image restoration have been explored in \cite{lanza2019sparsity}. For simplicity and consistency, throughout this paper, we assume that $B$ is specified as in \Eqn{setB}.

Various regularizers $\psi_B$ have been proposed in the literature. Choosing $\rho$ to be the $\ell_1$-norm leads to an extension to the MCP, the generalized minimax concave (GMC) penalty \cite{selesnick2017sparse}. Choosing $\rho$ to be the total-variation penalty \cite{rudin1992nonlinear}, a standard regularizer in signal and image denoising tasks, leads to estimators that better capture sharp transitions in signals more reliably
\cite{selesnick2017total, lanza2019sparsity}. Choosing $\rho$ to be the nuclear norm, a convex relaxation of the matrix rank, leads to improved methods for matrix completion \cite{lanza2019sparsity} and robust principal component analysis \cite{yin2019stable}. Choosing $\rho$ to be the $\ell_{2,1}$-norm \cite{liu2023convex, Chen2021}, leads to less biased grouped variable selection compared to the group Lasso penalty \cite{yuan2006model}. In all cases, $\psi_B$ achieves the same desired regularization effect as $\rho$ without its induced bias while also maintaining convexity of the objective function $h$.

Regarding its computation, problem \Eqn{cost} is typically solved via operator splitting methods. 
Setting $B$ as in \Eqn{setB}, problem \Eqn{cost} is equivalent to the following saddle point problem \cite{selesnick2017sparse}.
\begin{equation}\label{eq:minimax}
\min_{x\in \Real^p}\max_{v\in \Real^p}\quad H(x,v)
\end{equation}
where
\begin{equation}\label{eq:minimaxf}
H(x,v) = \frac{1}{2}\lVert y-Ax \rVert_2^2 + \lambda \rho(x) - \lambda \rho(v) - \frac{\gamma}{2} \lVert A(x-v) \rVert_2^2.
\end{equation}
We will see shortly that problem \eqref{eq:minimax} in turn is equivalent to an inclusion problem of the sum of two monotone operators. Therefore, operator splitting methods are natural strategies for solving \eqref{eq:minimax}. Interestingly, despite the availability of multiple splitting strategies,  \eqref{eq:minimax} has been mainly solved by forward-backward splitting (FBS)  \cite{combettes2011proximal} \cite{selesnick2017sparse,wang2018nonconvex,lanza2019sparsity,yin2019stable}. %Nonetheless, the computation of \Eqn{cost} can still be slower than ideal in practice. The exploration of alternative splitting methods and acceleration techniques for \Eqn{cost} remains relatively limited within the existing literature. To address this gap, in this paper we investigate several additional splitting methods for solving \Eqn{cost}. 
In this paper, we extensively explore potential operator-splitting-type algorithms for solving \Eqn{minimax}, including Douglas-Rachford splitting (DRS) \cite{douglas1956numerical, lions1979splitting, eckstein1992douglas}, forward-backward-forward splitting (FBFS) \cite{tseng2000modified}, and Davis-Yin splitting (DYS) \cite{davis2017three}. To further improve the efficiency, we investigate the Anderson acceleration (AA) \cite{Anderson65} of these operator-splitting methods. 

This paper makes the following contributions.
\begin{enumerate}
\item We demonstrate that FBFS can be a better choice than FBS for solving \eqref{eq:minimax} when the proximal operator is expensive to evaluate. We show that FBFS can take a larger step size than FBS and thus converges in fewer iterations.

\item  For CNC problems with two regularizers or an additional convex constraint, we are the first to show that the resulting three-operator monotone inclusion problem can be efficiently solved using DYS. 
  
\item We study Anderson accelerated operator splitting methods for solving CNC problems and show that the practical acceleration is significant over their un-accelerated counterparts. Furthermore, we theoretically ensure the global convergence of the accelerated algorithms through regularization and safeguarding. 
\end{enumerate}

\begin{remark} There has been several noteworthy extensions of problem \Eqn{cost}. The first is the linearly involved generalized Moreau enhanced (LiGME) 
model \cite{abe2020linearly,abe2019convexity,yata2022constrained}, which composes $\psi_B$ with an additional linear transformation matrix $L$. This enables the application of the construction technique of CNC to more general convex kernel functions. Another useful extension is the sharpening sparse regularizers (SSR) framework \cite{al2021sharpening}, where the infimal convolution in the regularizer differs from the LiGME model, which enables finer control of the shape of the regularizers. More recently, \cite{10251457} proposed the partially smoothed difference of convex regularizer (pSDC) model as a unified framework for convexity-preserving regularization. We remark that LiGME, SSR, and pSDC can all be considered as generalizations of the CNC model \Eqn{cost} and are interesting in their own right. The primary contribution of this paper is to introduce efficient new algorithms for solving \Eqn{cost}, since it is a fundamental model with many important applications, e.g., \cite{wang2018nonconvex,lanza2019sparsity,chatterjee2020review,yin2019stable,liu2023convex}. We note that the ideas and techniques in this work may be adapted to designing new algorithms for LiGME, SSR and pSDC, which we leave as a future work.% for us and the community.
\end{remark}

\begin{remark} 
The LiGME model \cite{abe2020linearly} and the pSDC model \cite{10251457} also enable the incorporation of multiple regularizers. However, LiGME does so at the expense of lifting the problem to a higher-dimensional space, while pSDC generally requires iteratively solving two sub-problems at each difference-of-convex programming iteration. This paper presents a conceptually simple and computationally efficient alternative based on the Davis-Yin splitting.
\end{remark}

\begin{remark}
In \cite{10251457}, it was noted that AA can be applied to accelerate the inner loops of the difference-of-convex programming iterations. This work takes a different approach; AA is directly applied to the proximal-splitting iterations. Moreover, we use regularized and safeguarded AA to ensure the global convergence of the accelerated algorithms. We note that in general, naively applying AA may result in a divergent algorithm, even if the original fixed-point iteration is globally convergent (see \App{patho} for an example of this phenomenon). Additionally, regularization and safeguarding may also stabilize the algorithm and facilitate convergence in fewer iterations. We illustrate this improved convergence in  \Sec{sgl}. 
\end{remark}

{\bf Organization.} The rest of this article is organized as follows. In \Sec{splitting}, we review concepts from convex analysis and monotone operator theory used in this paper. We then formulate the CNC problem \Eqn{cost} as a monotone-inclusion problem and review four operator splitting methods for solving the inclusion problem. In \Sec{aa}, we introduce AA and describe a unified acceleration framework for the four operator splitting methods.  In \Sec{numexp}, we provide several numerical examples that demonstrate the practical speed-ups achieved by the proposed algorithms. In \Sec{discuss}, we close with a discussion. Proofs are in the appendix.

\section{Operator Splitting Methods for CNC}\label{sec:splitting}
\subsection{Concepts from Convex Analysis}\label{sec:monotone}
We start with reviewing concepts from convex analysis and monotone operator theory used later in this article. Let $\mathcal{P}: \mathbb{R}^p \to 2^{\mathbb{R}^p}$ denote a set-valued operator that assigns to each $z \in \mathbb{R}^p$ a (possibly empty) subset of $\mathbb{R}^p$. The domain of $\mathcal{P}$ is $\text{dom}\; \mathcal{P} = \{z \in \mathbb{R}^p | \mathcal{P}z \neq \varnothing\}$. When $\mathcal{P}$ is single-valued and has full domain ($\text{dom} \; \mathcal{P}=\Real^p$), we write $\mathcal{P}: \Real^p \rightarrow \Real^p$.

The \textit{graph} of a set-valued operator $\mathcal{P}$ is 
$$
\text{gra} \;\mathcal{P} = \{(z,u)\in \Real^p\times \Real^p| u \in \mathcal{P}z \}. 
$$
A set-valued operator $\mathcal{P}: \mathbb{R}^p \to 2^{\mathbb{R}^p}$ is said to be \textit{monotone} if 
$$
(z-z')^\top (u-u') \ge 0, \;\forall (z,u),(z',u') \in \text{gra} \;\mathcal{P}.
$$
A monotone operator $\mathcal{P} : \Real^p \rightarrow 2^{\Real^p}$ is \textit{maximal monotone} if its graph is not a proper subset of the graph of another monotone operator.

The \textit{inverse} of $\mathcal{P}$, denoted by $\mathcal{P}^{-1}: \mathbb{R}^p \to 2^{\mathbb{R}^p}$, is the set-valued mapping
$$
\mathcal{P}^{-1} z =\{u|z\in \mathcal{P}u\}.
$$
The inverse operator of a maximal monotone operator is also maximal monotone. The set of zeros for the operator $\mathcal{P}$ is denoted by $\text{zer}\; \mathcal{P} = \mathcal{P}^{-1}0$. 

The \textit{resolvent} of $\mathcal{P} : \Real^p \rightarrow 2^{\Real^p}$ is $\mathcal{J}_{\mathcal{P}} = (\mathcal{I}+\mathcal{P})^{-1}$ where $\mathcal{I}$ is the identity mapping. A fundamental result in convex analysis is that the resolvent of a maximal monotone operator is single-valued and has full domain (see \cite[Corollary 23.10]{bauschke2011convex}). 

$\mathcal{P} : \Real^p \rightarrow \Real^p$ is \textit{firmly nonexpansive} if
$$
(z-z')^\top (\mathcal{P}z - \mathcal{P}z') \ge \lVert \mathcal{P}z - \mathcal{P}z'\rVert_2^2 \quad \forall\;z,z' \in \Real^p.
$$
Additonally, $\mathcal{P}$ is \textit{$\beta$-cocoercive} if $\beta \mathcal{P}$ is firmly nonexpansive. 

$\mathcal{P} : \Real^p \rightarrow \Real^p$ is \textit{$L$-Lipschitz} if 
$$
\lVert \mathcal{P}z - \mathcal{P}z'\lVert_2\le L \lVert z-z' \rVert_2, \quad
\forall\;z,z' \in \Real^p.
$$
In particular, if $\mathcal{P}$ is $\beta$-cocoercive, then $\mathcal{P}$ is $1/\beta$-Lipschitz. The converse is not true in general \cite{o2020equivalence}.

\subsection{CNC as Monotone Inclusion}\label{sec:split}
The optimality conditions of \Eqn{minimax} are given by the inclusions
\begin{align}
\begin{split}\label{eq:subgradient}
0 \in \partial_x H(x,v) &= A^\top (Ax-y) - \gamma A^\top A(x-v) + \lambda \partial \rho(x),\\
0 \in \partial_v H(x,v) &= \gamma A^\top A(x-v) - \lambda \partial \rho(v),
\end{split}
\end{align}
%where $\partial \rho(x)$ is the subdifferential of $\rho$. 
%Then the saddle point problem \Eqn{minimax1} is equivalent to the following monotone inclusion problem: 
where $\partial$ denotes the subdifferential operator. The inclusion condition \eqref{eq:subgradient} is  equivalent to the monotone inclusion problem
\begin{equation}\label{eq:operators}
\text{find } (x,v)\in \Real^{2p} \text{ s.t. } 0\in \mathcal{P}(x,v)+\mathcal{Q}(x,v)
\end{equation} where
\begin{align*}
\begin{split}
\mathcal{P}(x,v) &= \left(\begin{bmatrix}
1-\gamma & \gamma \\
-\gamma  & \gamma
\end{bmatrix}\Kron A^\top A\right)\begin{bmatrix}
x\\ v
\end{bmatrix} -\begin{bmatrix}
A^\top y\\ 0
\end{bmatrix},\\
\mathcal{Q}(x,v) &= \begin{bmatrix}
\lambda \partial \rho(x)
\\
\lambda \partial \rho(v)
\end{bmatrix}.
\end{split}
\end{align*}
\begin{proposition}\label{prop:satisfy}
\textit{Consider 
$\mathcal{P}$ and $\mathcal{Q}$ in \Eqn{operators}. Then
\begin{enumerate}
\item $\mathcal{P}$, $\mathcal{Q}$, and $\mathcal{P}+\mathcal{Q}$ are maximal monotone.  
\item $\mathcal{P}$ is $\beta$-cocoercive with $\beta = \min\{1,\frac{1-\gamma}{\gamma}\}/\lVert  A\rVert_2^2$ and $L$-Lipschitz with $L=\left\| \begin{bmatrix} 1-\gamma & \gamma \\ -\gamma & \gamma\end{bmatrix} \right\|_2\lVert  A\rVert_2^2$. 
\end{enumerate}}
\end{proposition}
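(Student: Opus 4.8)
The plan is to establish the two parts separately, reducing everything about the affine operator $\mathcal{P}$ to its linear part. Write $\mathcal{P}z = Mz - b$ with $z=(x,v)$, $b=(A^\top y,0)$, and $M = C\Kron A^\top A$, where $C=\begin{bmatrix}1-\gamma & \gamma\\ -\gamma & \gamma\end{bmatrix}$. For Part~1, I would first note that $\mathcal{Q}$ is the subdifferential of the proper, lower-semicontinuous, convex, separable function $(x,v)\mapsto \lambda\rho(x)+\lambda\rho(v)$, so it is maximal monotone \cite{bauschke2011convex}. Next, $\mathcal{P}$ is affine, hence single-valued, continuous, and of full domain; such an operator is maximal monotone iff it is monotone, and a map $z\mapsto Mz-b$ is monotone iff its symmetric part $M_{\mathrm{sym}}:=\tfrac12(M+M^\top)$ is positive semidefinite. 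Since $A^\top A$ is symmetric, $M_{\mathrm{sym}}=\tfrac12(C+C^\top)\Kron A^\top A=\mathrm{diag}(1-\gamma,\gamma)\Kron A^\top A$, a Kronecker product of positive-semidefinite matrices whenever $\gamma\in[0,1]$, and therefore $\succeq 0$.

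Finally, $\mathcal{P}+\mathcal{Q}$ is maximal monotone by the maximal-monotone sum theorem \cite{bauschke2011convex}: its constraint qualification is automatic because $\mathrm{dom}\,\mathcal{P}=\Real^{2p}$, so that $0\in\mathrm{int}(\mathrm{dom}\,\mathcal{P}-\mathrm{dom}\,\mathcal{Q})$ holds trivially.

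For the Lipschitz bound in Part~2, the affine structure gives $\mathcal{P}z-\mathcal{P}z'=M(z-z')$, so the best Lipschitz constant is $\lVert M\rVert_2$; the Kronecker operator-norm identity yields $\lVert M\rVert_2=\lVert C\Kron A^\top A\rVert_2=\lVert C\rVert_2\lVert A^\top A\rVert_2=\lVert C\rVert_2\lVert A\rVert_2^2$, which is the stated $L$.

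The cocoercivity constant is the step I expect to require the most care. Using $\mathcal{P}z-\mathcal{P}z'=M(z-z')$ and $w^\top Mw=w^\top M_{\mathrm{sym}}w$, $\beta$-cocoercivity of $\mathcal{P}$ is equivalent to the matrix inequality $M_{\mathrm{sym}}\succeq\beta M^\top M$, where $M^\top M=C^\top C\Kron(A^\top A)^2$. Diagonalizing $A^\top A=U\Sigma U^\top$ and conjugating both sides by the orthogonal matrix $I_2\Kron U$ block-diagonalizes this into $p$ independent $2\times 2$ conditions $\sigma_k^2\bigl(\mathrm{diag}(1-\gamma,\gamma)-\beta\sigma_k^2\,C^\top C\bigr)\succeq 0$, one for each eigenvalue $\sigma_k^2$ of $A^\top A$. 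Because $C^\top C\succeq 0$, the most restrictive condition occurs at $\sigma_k^2=\lVert A\rVert_2^2$, so the largest admissible $\beta$ equals $\lVert A\rVert_2^{-2}$ times the smallest generalized eigenvalue $\mu$ of the pencil $\mathrm{diag}(1-\gamma,\gamma)\,x=\mu\,C^\top C\,x$. A short determinant computation gives $\det C=\gamma$, hence the two generalized eigenvalues have product $\det(\mathrm{diag}(1-\gamma,\gamma))/\det(C^\top C)=(1-\gamma)/\gamma$ and sum $1/\gamma$; they are thus the roots of $\gamma\mu^2-\mu+(1-\gamma)=0$, namely $\{1,(1-\gamma)/\gamma\}$. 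Taking the smaller root gives $\beta=\min\{1,(1-\gamma)/\gamma\}/\lVert A\rVert_2^2$, with the degenerate case $\gamma=0$ (where $C^\top C$ is singular) checked directly to give $\beta=1/\lVert A\rVert_2^2$.
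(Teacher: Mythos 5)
Your proof is correct, and while your Part~1 and the Lipschitz bound follow essentially the paper's lines (the paper is terser there: it asserts maximal monotonicity of $\mathcal{P}$ from single-valuedness, whereas you actually verify monotonicity via positive semidefiniteness of the symmetric part $\mathrm{diag}(1-\gamma,\gamma)\otimes A^\top A$ and then invoke continuity plus full domain, which is the more careful argument), your cocoercivity proof takes a genuinely different route. The paper imports a sufficient condition from \cite[Proposition 15]{selesnick2017sparse}: it splits the constant as $\beta=\beta_1\beta_2$, with $\beta_2\le 1/\lVert A\rVert_2^2$ and $\beta_1$ constrained by positive semidefiniteness of a $2\times 2$ matrix, which it certifies through the trace and determinant inequalities; this forces an extra step showing that the determinant bound $\beta_1\le\min\{1,(1-\gamma)/\gamma\}$ implies the trace bound $\beta_1\le 1/(1-2\gamma+4\gamma^2)$. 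You instead work directly from the definition: cocoercivity of the affine map is exactly the matrix inequality $M_{\mathrm{sym}}\succeq\beta M^\top M$, which you block-diagonalize through the spectral decomposition of $A^\top A$ and reduce to a generalized eigenvalue problem for the pencil $\left(\mathrm{diag}(1-\gamma,\gamma),\,C^\top C\right)$; its eigenvalues $\{1,(1-\gamma)/\gamma\}$ are the roots of $\gamma\mu^2-\mu+(1-\gamma)=0$, which is precisely the quadratic that appears as the paper's determinant condition, so the two arguments meet at the same algebra. What your route buys: it is self-contained (no appeal to an external proposition), it produces the sharp constant rather than merely a valid one, and it eliminates the separate trace-condition verification because the pencil characterization is exact. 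What it costs: the pencil argument needs $C^\top C\succ 0$, so $\gamma=0$ must be (and is) handled separately, and your claim that the generalized eigenvalues sum to $1/\gamma$ deserves one more line — it is the trace of $(C^\top C)^{-1}\mathrm{diag}(1-\gamma,\gamma)$, not a consequence of $\det C=\gamma$ alone — though that computation does check out.
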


In some applications we may be interested in using a mix of two convex regularizers $\lambda_1\rho_1 +\lambda_2\rho_2$.
In this case, the CNC problem \eqref{eq:cost} is equivalent to the saddle point problem $\min_x\max_v H(x,v)$, where
\begin{align}
\label{eq:twominimax}
\begin{split} H(x,v)  :=  \frac{1}{2}\lVert y-Ax \rVert_2^2 + \lambda_1 \rho_1(x) + \lambda_2 \rho_2(x)\\- \lambda_1 \rho_1(v) - \lambda_2 \rho_2(v) -\frac{\gamma}{2} \lVert A(x-v) \rVert_2^2.
\end{split}
\end{align}
It is straightforward to verify that the optimality condition for \Eqn{twominimax} is equivalent to a monotone inclusion problem with three operators.
%Take partial derivatie with respect to $x$ and $v$, \Eqn{minimax3} is then equivalent to the monotone inclusion problem 
\begin{equation}\label{eq:operators3}
\text{find } (x,v)\in \Real^{2p} \text{ s.t. } 0\in \mathcal{P}(x,v)+\mathcal{Q}(x,v)+\mathcal{R}(x,v)
\end{equation} where
\begin{align*}
\begin{split}\label{eq:PQR}
\mathcal{P}(x,v) & = \left(\begin{bmatrix}
1-\gamma & \gamma \\
-\gamma  & \gamma
\end{bmatrix}\Kron A^\top A\right)\begin{bmatrix}
x\\ v
\end{bmatrix} -\begin{bmatrix}
A^\top y\\ 0
\end{bmatrix},
\\
\mathcal{Q}(x,v) & = \begin{bmatrix}
\lambda_1 \partial \rho_1(x)
\\
\lambda_1 \partial \rho_1(v)
\end{bmatrix},\quad \mathcal{R}(x,v) = \begin{bmatrix}
\lambda_2 \partial \rho_2(x)
\\
\lambda_2 \partial \rho_2(v)
\end{bmatrix}.
\end{split}
\end{align*}

Another interesting setting is that one may want to impose a convex set constraint $x\in C\subset \Real^p$ in  problem \eqref{eq:cost}, which leads to
\begin{equation}
\label{eq:cost2}
\underset{x\in C}{\min}\quad h(x) = \frac{1}{2}\lVert y-Ax \rVert_2^2 + \lambda \psi_B(x).
\end{equation}
Problem \eqref{eq:cost2} is also equivalent to the three-operator monotone inclusion problem \Eqn{operators3}, but the operators $\mathcal{Q}$ and $\mathcal{R}$ are now
\begin{align*}
\mathcal{Q}(x,v) & = \begin{bmatrix}
\lambda \partial \rho(x)
\\
\lambda \partial \rho(v)
\end{bmatrix},\quad \mathcal{R}(x,v) = \begin{bmatrix}
 \partial \iota_C(x)
\\
0
\end{bmatrix},
\end{align*}
where $\iota_C$ is the indicator function for the convex set $C$.
\subsection{Operator Splitting Methods}
We review four operator-splitting methods and their fixed point iterations (FPI) for solving \Eqn{operators} and \Eqn{operators3}. Let $\mathcal{J}_{\mu \mathcal{T}}$ denote the resolvent of operator $\mu\mathcal{T}$, i.e., $\mathcal{J}_{\mu\mathcal{T}}z = (\mathcal{I}+\mu\mathcal{T})^{-1}z$. For clearer presentation, we write \Eqn{operators} and \eqref{eq:operators3} as 
% Many optimization problems can be viewed as a monotone inclusion problem, that is, finding a zero of the sum of multiple maximal monotone operators. The most common form of a monotone inclusion problem involves two operators:
\begin{equation}
\label{eq:twoOS}
\text{find $z\in \Real^{2p}$ such that }0\in \mathcal{P}z + \mathcal{Q}z,
\end{equation}
%where $\mathcal{P},\mathcal{Q}:\Real^q \rightarrow 2^{\Real^q}$ are maximal monotone. In some cases, it is useful to consider a three-operator monotone inclusion problem:
and 
\begin{equation}\label{eq:threeOS}
\text{find $z\in \Real^{2p}$ such that }0\in \mathcal{P}z + \mathcal{Q}z + \mathcal{R}z.
\end{equation}
%where $\mathcal{P},\mathcal{Q},\mathcal{R}:\Real^q \rightarrow 2^{\Real^q}$ are all maximal monotone.

%In this paper, we consider four fixed-point iteration (FPI) schemes for solving \Eqn{operators} and \eqref{eq:operators3}. In \Sec{aa} we will describe how to perform Anderson acceleration to each of these schemes.

\noindent \textbf{Douglas-Rachford Splitting} 
When $\mathcal{J}_{\mu\mathcal{P}}$ and $\mathcal{J}_{\mu\mathcal{Q}}$ are both easy to compute, DRS is a standard algorithm for solving \Eqn{twoOS}. The iterates are given by
\begin{align}\label{eq:driteration}
\begin{split}
z^{k+\frac{1}{3}} &= \mathcal{J}_{\mu\mathcal{P}}z^k,\\
z^{k+\frac{2}{3}} &= \mathcal{J}_{\mu\mathcal{Q}}(2z^{k+\frac{1}{3}}-z^k),\\
z^{k+1} &= z^k - z^{k+\frac{1}{3}} + z^{k+\frac{2}{3}},
\end{split}
\end{align}
and global convergence is guaranteed when $\mu > 0$. The FPI of DRS can be written as $F_{\text{DRS}} = \mathcal{I} - \mathcal{J}_{\mu\mathcal{P}}+ \mathcal{J}_{\mu\mathcal{Q}} \circ (2\mathcal{J}_{\mu\mathcal{P}}-\mathcal{I})$. 

\noindent \textbf{Forward-backward Splitting} When $\mathcal{P}$ is single-valued and $\beta$-cocoercive, another common splitting method to solve \Eqn{twoOS} is  FBS. The iterates are given by
\begin{align}\label{eq:fbiteration}
\begin{split}
z^{k+\frac{1}{2}} &= (\mathcal{I}-\mu \mathcal{P}) z^{k},\\
z^{k+1} &= \mathcal{J}_{\mu\mathcal{Q}} z^{k+\frac{1}{2}},
\end{split}
\end{align} 
and global convergence is guaranteed when $\mu \in (0,2\beta)$. The FPI of FBS can be written as $F_{\text{FBS}}= \mathcal{J}_{\mu\mathcal{Q}}\circ(\mathcal{I}-\mu \mathcal{P})$.

\noindent \textbf{Forward-backward-forward Splitting} FBFS, also called Tseng's algorithm \cite{tseng2000modified}, is similar to FBS but differs in two key aspects. First, it is applicable as long as $\mathcal{P}$ is $L$-Lipschitz, which is a weaker condition than cocoercivity. Second, it requires two forward operations per iteration.  The iterates of FBFS for solving \Eqn{twoOS} are given by
\begin{align}\label{eq:fbfiteration}
\begin{split}
z^{k+\frac{1}{3}} &= (\mathcal{I}-\mu \mathcal{P}) z^{k},\\
z^{k+\frac{2}{3}} &= \mathcal{J}_{\mu\mathcal{Q}} z^{k+\frac{1}{3}},\\
z^{k+1} &= z^k - z^{k+\frac{1}{3}} + (\mathcal{I}-\mu \mathcal{P})z^{k+\frac{2}{3}},
\end{split}
\end{align}
and global convergence is guaranteed when $\mu\in(0,1/L)$. 
The FPI of FBFS can be written as $F_{\text{FBFS}} = \mu \mathcal{P} + (\mathcal{I}-\mu \mathcal{P})\circ \mathcal{J}_{\mu\mathcal{Q}}\circ (\mathcal{I}-\mu \mathcal{P})$. 

\noindent \textbf{Davis-Yin Splitting} DYS can be applied to solve \Eqn{threeOS} when $\mathcal{P}$ is single-valued and $\beta$-cocoercive. The iterates are given by 
\begin{align}\label{eq:dyiteration}
\begin{split}
z^{k+\frac{1}{3}} &= \mathcal{J}_{\mu\mathcal{R}} z^{k},\\ 
z^{k+\frac{2}{3}} &= \mathcal{J}_{\mu\mathcal{Q}} (2z^{k+\frac{1}{3}}-z^k-\mu \mathcal{P}z^{k+\frac{1}{3}}),\\
z^{k+1} &= z^k - z^{k+\frac{1}{3}} + z^{k+\frac{2}{3}},
\end{split}
\end{align} 
and global convergence is guaranteed when $\mu \in (0,2\beta)$. 
The FPI of DYS can be written as $F_{\text{DYS}} = \mathcal{I} - \mathcal{J}_{\mu \mathcal{R}}+\mathcal{J}_{\mu \mathcal{Q}}\circ(2\mathcal{J}_{\mu \mathcal{R}} - \mathcal{I} - \mu \mathcal{P}\circ \mathcal{J}_{\mu \mathcal{R}})$. 

\section{Anderson Acceleration}\label{sec:aa}
Anderson Acceleration (AA) is a classic technique for expediting the convergence of fixed point iterations (FPI) \cite{Anderson65}. In the last decade, significant progress has been made in understanding AA from multiple perspectives. These include connections to the generalized minimal residual algorithm (GMRES) \cite{saad1986gmres, walker2011anderson, potra2013characterization}, connections to quasi-Newton methods \cite{fang2009two}, convergence analysis \cite{toth2015convergence,chen2019convergence, de2022linear}, and insights into acceleration behavior \cite{evans2020proof,rebholz2023effect}. 

Much of the recent interest in AA comes from solving optimization problems. Scieur et al.\@ \cite{scieur2016regularized} first considered the application of AA to general unconstrained optimization problems. Subsequently, AA has been used to accelerate EM algorithms \cite{henderson2019damped}, proximal gradient methods \cite{mai2020anderson}, coordinate descent methods \cite{bertrand2021anderson}, the Alternating Direction Method of Multipliers (ADMM) \cite{zhang2019accelerating, wang2021asymptotic},  gradient descent-ascent  for solving smooth minimax problems  \cite{he2022gda}, stochastic gradient descent for deep learning \cite{wei2021stochastic}, and momentum and primal-dual algorithms including Nesterov's method and primal-dual hybrid gradient (PDHG)\cite{bollapragada2023nonlinear}. The application of AA in different optimization problems is often accompanied by custom modifications, such as damping and preconditioning. %We refer readers to the cited references for details on these modifications.

Zhang et al.\@ \cite{zhang2020globally} introduced the first globally convergent version of AA, known as type-I AA, which requires tuning  hyperparameters to achieve good performance \cite{fu2020anderson}. To address this, \cite{fu2020anderson} proposed a globally convergent type-II AA algorithm for Douglas-Rachford splitting that delivers robust performance while mostly tuning-free.

In this paper, we generalize the Anderson accelerated Douglas-Rachford splitting algorithm proposed in \cite{fu2020anderson} to provide a unified framework of type-II AA for DRS, FBS, FBFS, and DYS. Expanding the existing framework from \cite{fu2020anderson} to include additional splitting methods is crucial for our goal of expediting the computation of CNC problems. As we will see later, in most cases, these alternative splitting methods offer superior computational and memory efficiency compared to the standard Douglas-Rachford splitting.

\subsection{Standard Form of AA}
We now introduce the standard (type-II) AA algorithm as described in \cite{Anderson65,walker2011anderson}. Consider a fixed-point mapping $F: \Real^{p}\rightarrow  \Real^{p}$.  The residual function of the FPI is defined as $G(z) = z - F(z)$. The memory size is a positive integer $M$, and $M^k = \min\{k,M\}$ denotes the memory size after $k$ iterations. At iteration $k$,  classic AA solves the following constrained least squares problem to compute a set of weights $\alpha_j^k, j = 0,1,\dots, M^k$:
\begin{align}\label{eq:aa2} 
\min_{\alpha^k \in \Real^{M^k+1}} \quad \left\lVert \sum_{j=0}^{M^k} \alpha_j^k G(z^{k-M^k+j}) \right\rVert_2^2,~~ \text{s.t.} \quad \sum_{j=0}^{M^k} \alpha_j^k = 1. 
\end{align}
The next iterate $z^{k+1}$ is obtained by combining the previous iterates according to the calculated weights, i.e. $z^{k+1} = \sum_{j=0}^{M^k}\alpha_j^kF(z^{k-M^k+j})$. After computing $z^{k+1}$, AA updates its memory to track  the $M^k$ most recent iterates, i.e., $\left(F\left(z^{k+1}\right), \dots, F\left(z^{k+1-M^{k}}\right)\right)$. 

\subsection{Regularization and Safeguarding}

The acceleration framework described in \Eqn{aa2} can be applied to almost any FPI, and empirical convergence is commonly observed. This raises a natural question: is \Eqn{aa2} globally convergent whenever the original FPI is globally convergent? Mai and Johansson \cite{mai2020anderson} demonstrated that this is not the case in general with a counterexample where AA iterates settle into in a periodic orbit. They restored global convergence by adding safeguard steps in the context of proximal gradient descent. Their safeguarding technique and global convergence results, however, rely on the strong convexity of the objective function, which is a strong assumption.

Fu et al.\@ \cite{fu2020anderson} introduced a globally convergent AA algorithm for DRS 
\Eqn{driteration}, using an adaptive regularization technique and incorporating safeguarding steps based on the norm of $G(z)$. Unlike the algorithm proposed in \cite{mai2020anderson}, the AA algorithm presented in \cite{fu2020anderson} only requires the original assumptions of DRS made to ensure global convergence. Notably, these assumptions do not include strong convexity. In fact, it does not restrict the optimization problem to be a minimization problem. Thus, the algorithm in \cite{fu2020anderson} can be applied to solve our problems of interest. However, DRS can be slow in our context due to the expensive matrix inversion that occurs in the evaluation of $\mathcal{J}_{\mu \mathcal{P}}$. To facilitate efficient solutions to CNC problems, we generalize the framework in \cite{fu2020anderson} to include several other splitting schemes, namely FBS \Eqn{fbiteration}, FBFS \Eqn{fbfiteration}, and DYS \Eqn{dyiteration}. 

Let $g^k = G(z^k)$, $y^k=g^{k+1}-g^k$, $s^k = z^{k+1}-z^k$. Denote the history of $y^k$ with $Y_k = \left[y^{k-M^k},\dots,y^{k-1}\right]$ and likewise the history of $s^k$ with $S_k$ = $\left[s^{k-M^k},\dots,s^{k-1}\right]$. Following a reparameterization in \cite{fang2009two}, problem \Eqn{aa2} is equivalent to the following unconstrained least squares problem
\begin{equation}\label{eq:aa2uc}
\min_{\zeta^k \in \Real^{M^k}}\quad \lVert g^k - Y_k\zeta^k\rVert_2,
\end{equation}
where
\begin{align}\label{eq:alphakformula}
\begin{split}
\alpha_0^k = \zeta_0^k,\quad  \alpha_i^k = \zeta_i^k-\zeta_{i-1}^k,\quad \alpha_{M^k}^k = 1-  \zeta_{M^k-1}^k,
\end{split}
\end{align}
and $i=1,2,\dots,M^k-1$.

The key to establishing the converge guarantees in \cite{fu2020anderson} are i) adaptive regularization and ii) safeguarding. Our generalization also relies on these techniques, so we review them now. Fu et al.\@ \cite{fu2020anderson} modified the classic AA and solved a regularized least squares problem in lieu of \Eqn{aa2}.
\begin{equation}\label{eq:aa2ucreg}
\min_{\zeta^k \in \Real^{M^k}}\quad\left\lVert g^k - Y_k\zeta^k\right\rVert_2^2+\eta\left(\lVert S_k\rVert_{\text{F}}^2+\lVert Y_k\rVert_{\text{F}}^2\right)\left\lVert \zeta^k\right\rVert_2^2,
\end{equation}
where $\eta>0$ is a small positive constant. The regularization technique \Eqn{aa2ucreg} is adaptive. As $\lVert S_k\rVert_{\text{F}}^2$ and $\lVert Y_k\rVert_{\text{F}}^2$ tend towards zero, the regularization vanishes. This is in contrast to the Tikhonov regularization approach employed in \cite{scieur2016regularized}.  The least $\ell_2$-norm solution to \Eqn{aa2ucreg} is
\begin{equation}\label{eq:gammakformula}
\zeta^k = \left(Y_k^\top Y_k + \eta(\lVert S_k\rVert_{\text{F}}^2+\lVert Y_k\rVert_{\text{F}}^2)I\right)^{\dagger} Y_k^\top g^k.
\end{equation}
%Recall that in AA, the next iterate is generated as $z^{k+1} = \sum_{j=0}^{M^k}\alpha_j^kF(z^{k-M^k+j})$.
Moreover, the relationship between $\zeta^k$ and $\alpha^k$ provides a convenient update rule for $z^{k}$.

\Alg{a2os} presents pseudo code for our safeguarded AA algorithm. In the algorithm, we use $Z_k$ to denote the matrix of past iterates $[F(z^{k-M^k}),\dots,F(z^{k})]$. Note that while $Y_k$ and $S_k$ have $M^k$ columns, $Z_k$ has $M^k+1$ columns.

\subsection{Main Algorithm}
\begin{algorithm}[htbp]
\caption{Anderson Accelerated Operator Splitting (A2OS)}\label{alg:a2os}
\begin{algorithmic}[1]
\REQUIRE Splitting scheme $\text{OS}\in\{\text{DRS},\text{FBS},\text{FBFS}, \text{DYS}\}$, initial value $z^0\in \Real^{2p}$, regularization coefficient $\eta>0$, safeguarding constants $D,\epsilon>0$, maximum memory size $M\in \mathbb{Z}_{+}$.

\STATE $z^1_{\text{OS}}=z^1=F_{\text{OS}}(z^0)$, $g^0 = z^0 - z^1$. 
\STATE Initialize $i=0$, $Z_k=z^1_{\text{OS}}$.
\FOR{$k = 1, 2, \dots$}
\STATE $M^k = \min\{k,M\}$.
\IF{$\text{OS}\in\{\text{DRS},\text{FBS}$, \text{DYS}\}}
    \STATE $z^{k+1}_{\text{OS}}=F_{\text{OS}}(z^k)$.
\ELSE
    \STATE $z^{k+1}_{\text{F}} = (\mathcal{I}-\mu \mathcal{P})z^k$, 
    \STATE $z^{k+1}_{\text{FBS}}$ = $\mathcal{J}_{\mu\mathcal{Q}} z^{k+1}_{\text{F}}$,
    \STATE $g^k_{\text{FBS}} = z^k - z^{k+1}_{\text{FBS}}$, \STATE $z^{k+1}_{\text{OS}} = z^k - z^{k+1}_{\text{F}} + (\mathcal{I}-\mu \mathcal{P})z^{k+1}_{\text{FBS}}$.
\ENDIF
\STATE $g^k = z^k - z^{k+1}_{\text{OS}}$.
\STATE  Update $Z_k$ with $z^{k+1}_{\text{OS}}$, $Y_k$ with $y^{k-1} = g^k - g^{k-1}$.
\STATE Update $S_k$ with $s^{k-1}=z^k-z^{k-1}$.
\STATE Compute $\zeta^k\in \Real^{M^k}$ with \Eqn{gammakformula}.
\STATE Compute $\alpha^k\in \Real^{M^k+1}$ with \Eqn{alphakformula}. 
\STATE Compute the AA candidate $z^{k+1}_{\text{AA}}=Z_k \alpha^k$.
\IF{$\text{OS}\in\{\text{DRS},\text{FBS},\text{DYS}\}$ and \\$\lVert g^k \rVert_2\le D\lVert g^0 \rVert_2(i+1)^{-1-\epsilon}$}
    \STATE $z^{k+1} =z^{k+1}_{\text{AA}}$, $i=i+1$.
\ELSIF{$\text{OS}=\text{FBFS}$ and \\$\lVert g^k_{\text{FBS}} \rVert_2\le \frac{1}{2}D\lVert g^0 \rVert_2(i+1)^{-1-\epsilon}$} 
    \STATE $z^{k+1} =z^{k+1}_{\text{AA}}$, $i=i+1$.
\ELSE
    \STATE $z^{k+1} =z^{k+1}_{\text{OS}}$.
\ENDIF
%\IF{$\lVert g^k \rVert_2\le(\lVert z^k \rVert_2+1) \epsilon_{\text{tol}}$ for 5 consecutive iterations}
    %\STATE Set $z^*=z^k$ and break from the for loop.
%\ENDIF
\ENDFOR

\end{algorithmic}
\end{algorithm}

\begin{remark} Although \Alg{a2os} involves multiple hyperparameters, such as $\eta$, $D$, and $\epsilon$, the safeguard steps in \Alg{a2os} are in fact quite lenient due to the typically large value of $\lVert g^0\rVert_2$. In practice, the safeguarding checks in lines 19 and 21 are violated only a few times throughout the iterative process. Hence, these hyperparameters have relatively small impact on the practical performance of the algorithm in most cases, as long as they are conservatively chosen. The memory parameter $M$ is common to all limited-memory AA algorithms. A larger $M$ will enables convergence in potentially fewer iterations but incurs a higher overhead in storing $Y_k, Z_k$ and computing $\zeta^k$ (note that we in fact do not need to store $S_k$). In practice, $5\le M\le 20$ strikes a good balance. We use $M=10$ in our experiments.
\end{remark}
\begin{remark}
If $\eta=0$ and $D=\infty$, the safeguard checks in lines 19 and 21 will always pass, and \Alg{a2os} reduces to standard AA \Eqn{aa2}. If $D=0$, the safeguard checks always fail, and the algorithm reduces to original fixed-point iterations. Mai and Johansson \cite{mai2020anderson} presented a  counterexample to demonstrate that AA can become stuck in a periodic orbit even if the original FPI is globally convergent. We show that our regularized and safeguarded \Alg{a2os} is able to converge on this example in \App{patho}. This provides important numerical evidence for our global convergence result.
\end{remark}
We establish the global convergence of \Alg{a2os} with the following result.
\begin{theorem}\label{thm:aaconverge}
\textit{
Suppose that $\text{zer}\;(\mathcal{P}+\mathcal{Q})$ or $\text{zer}\;(\mathcal{P}+\mathcal{Q}+\mathcal{R})$ is nonempty. Let $z^0 \in \mathbb{R}^{2p}$ be an arbitrary initialization, $\eta$, $D$, and $\epsilon$ be positive hyperparameters, along with a positive integer $M$, then \Alg{a2os} generates a sequence ${z^k}$ that converges to a point $z^*\in \mathbb{R}^{2p}$. For FBS and FBFS, $z^* \in \text{zer}\;(\mathcal{P}+\mathcal{Q})$. For DRS and DYS, the desired optimal points in $\text{zer}\;(\mathcal{P}+\mathcal{Q})$, $\text{zer}\;(\mathcal{P}+\mathcal{Q}+\mathcal{R})$ can be obtained by $\mathcal{\mathcal{J}_{\mu\mathcal{P}}}z^*$ and
$\mathcal{\mathcal{J}_{\mu\mathcal{R}}}z^*$, respectively.}
\end{theorem}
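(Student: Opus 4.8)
The plan is to cast all four accelerated schemes into a single fixed-point framework and then run a safeguarding argument in the spirit of \cite{fu2020anderson}. First I would record, for each $\text{OS}\in\{\text{DRS},\text{FBS},\text{DYS}\}$, that by \Prop{satisfy} the relevant operators are maximal monotone and $\mathcal{P}$ is cocoercive, so under the stated step sizes the map $F_{\text{OS}}$ is averaged; hence $\text{Fix}(F_{\text{OS}})$ is nonempty (it is in bijection with the assumed-nonempty zero set via $\mathcal{J}_{\mu\mathcal{P}}$ or $\mathcal{J}_{\mu\mathcal{R}}$), and for every fixed point $z^*$ one has the descent inequality $\lVert F_{\text{OS}}(z^k)-z^*\rVert_2^2 \le \lVert z^k-z^*\rVert_2^2 - c\lVert g^k\rVert_2^2$ with $c>0$ and $g^k=G(z^k)$. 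For FBFS the map is only quasi-nonexpansive, so I would instead invoke Tseng's inequality to obtain the same type of bound with the monitored residual $g^k_{\text{FBS}}$ in place of $g^k$, and use the elementary estimate $\lVert g^k\rVert_2\le(1+\mu L)\lVert g^k_{\text{FBS}}\rVert_2\le 2\lVert g^k_{\text{FBS}}\rVert_2$ (which is exactly why line 21 carries the factor $\tfrac12$) to unify the safeguard: in all four cases, whenever an AA candidate is accepted at iteration $k$ we have $\lVert g^k\rVert_2\le D\lVert g^0\rVert_2(i+1)^{-1-\epsilon}$, and the monitored residual vanishes precisely at solutions.

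Next I would split on whether finitely or infinitely many AA candidates are accepted. If only finitely many are accepted, the tail of the algorithm is a pure $F_{\text{OS}}$ iteration, which converges by the classical convergence of the respective splitting method cited in \Sec{splitting}; the optimal point is then recovered by applying $\mathcal{J}_{\mu\mathcal{P}}$ (DRS) or $\mathcal{J}_{\mu\mathcal{R}}$ (DYS) to the limit, and the limit is itself optimal for FBS and FBFS. The substantive case is infinitely many acceptances. Along the accepted subsequence $\{k_i\}$ the safeguard forces $\lVert g^{k_i}\rVert_2\le D\lVert g^0\rVert_2(i+1)^{-1-\epsilon}\to 0$; since $G$ (resp.\ the FBS residual map) is continuous and its zeros are exactly the solutions, any cluster point of $\{z^{k_i}\}$ solves the inclusion.

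To upgrade this to convergence of the full sequence I would establish quasi-Fej\'er monotonicity with respect to the solution set. During the $F_{\text{OS}}$ steps the distance $\lVert z^k-z^*\rVert_2$ is nonincreasing by (quasi-)nonexpansiveness, so the only danger is the jumps $z^{k+1}=z^{k+1}_{\text{AA}}=Z_k\alpha^k$. Writing $z^{k+1}_{\text{AA}}-z^*=\sum_j\alpha_j^k\bigl(F_{\text{OS}}(z^{k-M^k+j})-z^*\bigr)$ and using $\sum_j\alpha_j^k=1$ together with $\lVert F_{\text{OS}}(z)-z^*\rVert_2\le\lVert z-z^*\rVert_2$ gives $\lVert z^{k+1}_{\text{AA}}-z^*\rVert_2\le\lVert\alpha^k\rVert_1\max_j\lVert z^{k-M^k+j}-z^*\rVert_2$, so everything reduces to a uniform bound on $\lVert\alpha^k\rVert_1$. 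This is what the adaptive regularization in \Eqn{aa2ucreg} buys: comparing the optimal $\zeta^k$ against $\zeta=0$ yields $\eta(\lVert S_k\rVert_{\text{F}}^2+\lVert Y_k\rVert_{\text{F}}^2)\lVert\zeta^k\rVert_2^2\le\lVert g^k\rVert_2^2$, and because an $F_{\text{OS}}$ step makes $\lVert s^{k-1}\rVert_2=\lVert g^{k-1}\rVert_2\ge\lVert g^k\rVert_2$, the denominator dominates the numerator and $\lVert\zeta^k\rVert_2\le\eta^{-1/2}$; the reparameterization \Eqn{alphakformula} then gives $\lVert\alpha^k\rVert_1\le 1+2\sqrt{M}\,\eta^{-1/2}=:C_\eta$. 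With the jumps thus controlled the sequence is quasi-Fej\'er monotone and bounded, $\lVert z^k-z^*\rVert_2$ converges for every solution $z^*$, and together with the subsequential solution found above a standard quasi-Fej\'er argument (e.g.\ \cite{bauschke2011convex}) delivers convergence of the whole sequence, with the optimal point recovered by the appropriate resolvent.

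The main obstacle I anticipate is precisely this uniform control of the AA jump, i.e.\ the bound on $\lVert\alpha^k\rVert_1$. The clean estimate above uses that the iterate immediately preceding the accepted step was produced by an $F_{\text{OS}}$ step; handling \emph{consecutive} accepted AA steps, where $s^{k-1}$ is no longer $-g^{k-1}$, requires extra care, and reconciling the FBFS monitored residual $g^k_{\text{FBS}}$ with the quantity $g^k$ appearing in \Eqn{gammakformula} (including the nonincrease of the residual along Tseng steps) is the book-keeping that makes the unified statement hold across all four splittings.
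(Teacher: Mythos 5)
There is a genuine gap at the heart of your argument: the quasi-Fej\'er step. Your control of an accepted AA step is \emph{multiplicative} --- $\lVert z^{k+1}_{\text{AA}} - z^*\rVert_2 \le \lVert \alpha^k\rVert_1 \max_j \lVert z^{k-M^k+j}-z^*\rVert_2$ with $\lVert\alpha^k\rVert_1 \le C_\eta = 1+2\sqrt{M}\,\eta^{-1/2} > 1$. Quasi-Fej\'er monotonicity requires $\lVert z^{k+1}-z^*\rVert_2^2 \le \lVert z^k-z^*\rVert_2^2 + \epsilon_k$ with \emph{summable additive} errors $\epsilon_k$; a fixed multiplicative factor $C_\eta>1$ applied at infinitely many accepted steps (and the interesting case is precisely infinitely many acceptances) allows $\lVert z^k - z^*\rVert_2$ to grow geometrically, so neither boundedness nor quasi-Fej\'er monotonicity follows from your bound. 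Symptomatically, your Fej\'er argument never uses the decaying threshold $D\lVert g^0\rVert_2(i+1)^{-1-\epsilon}$ --- you invoke it only to identify cluster points --- yet that decay is the entire reason the safeguard has this form. The paper's proof instead writes the accepted AA step as $z^{k+1} = z^k - H_k g^k$ with
$$
H_k = I+(S_k-Y_k)\left(Y_k^\top Y_k + \eta(\lVert S_k\rVert_{\text{F}}^2+\lVert Y_k\rVert_{\text{F}}^2)I\right)^{\dagger} Y_k^\top,
$$
bounds $\lVert H_k\rVert_2 \le 1+2/\eta$ (this is where the regularization enters, via \Lem{hk2norm}), and then uses the safeguard to bound the jump at the $i$-th acceptance by $(1+2/\eta)D\lVert g^0\rVert_2(i+1)^{-(1+\epsilon)}$, which is summable. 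Combined with the Fej\'er descent inequalities at unaccelerated steps, this yields boundedness, quasi-Fej\'er monotonicity, and (by summing the squared inequalities) $\sum_j \lVert g^{l_j}\rVert_2^2 < \infty$, hence $\lVert g^k\rVert_2 \to 0$ along the whole sequence, after which the quasi-Fej\'er lemma finishes the proof exactly as you envision.

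Two secondary issues reinforce that your route cannot be patched as stated. First, your bound $\lVert\zeta^k\rVert_2\le\eta^{-1/2}$ requires $\lVert S_k\rVert_{\text{F}} \ge \lVert g^k\rVert_2$, which you justify only when the immediately preceding step was an unaccelerated step and the residual is nonincreasing along $F_{\text{OS}}$; as you acknowledge, consecutive accepted AA steps break this, and for FBFS the fixed-point map is only quasi-nonexpansive, so residual monotonicity is not available either. Second, even your convex-combination expansion $z^{k+1}_{\text{AA}}-z^* = \sum_j \alpha^k_j(F(z^{k-M^k+j})-z^*)$ relies on nonexpansiveness of $F_{\text{OS}}$ applied at stale iterates, which again fails for FBFS. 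The paper's $H_k$-based bound sidesteps all of this: it needs no sign or size control on $\alpha^k$ beyond the pseudoinverse identity, no relation between $s^{k-1}$ and $g^k$, and treats FBFS uniformly by converting $\lVert g^{k}\rVert_2 \le 2\lVert g^{k}_{\text{FBS}}\rVert_2$ (the factor-of-$\tfrac12$ in the safeguard, which you did identify correctly). The remaining elements of your outline --- the case split on finitely versus infinitely many acceptances, cluster points being zeros, and recovery of the optimum via $\mathcal{J}_{\mu\mathcal{P}}$ or $\mathcal{J}_{\mu\mathcal{R}}$ --- do match the paper.
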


\section{Numerical Experiments}\label{sec:numexp}
In this section, we present a series of numerical examples to illustrate the acceleration capabilities of the proposed algorithm. In all examples, we first introduce the problem and assess the statistical benefits of using CNC regularizer over alternative regularizers to motivate accelerating solvers of \eqref{eq:cost}. Unless stated otherwise, we use the following set of hyperparameters for our experiments: the maximum memory size for AA is  $M=10$, the regularization coefficient is $\eta=10^{-2}$, and the safeguarding constants are  $D=10$ and $\epsilon=10^{-6}$. We terminate the algorithm if $\lVert g^k \rVert_2<(\lVert z^k \rVert_2+1) \epsilon_{\text{tol}}$, where $\epsilon_{\text{tol}}$ is the termination tolerance and by default $\epsilon_{\text{tol}} = 10^{-5}$.  All experiments are completed on a cluster using 8 processor cores with 8GB of RAM for each processor core. Our experiments are fully reproducible with code available at \url{https://github.com/qhengncsu/AA_CNC}. 
\subsection{Sparse Linear Regression}
\label{sec:sparse-linear} 
In this section, we apply the CNC regularization strategy \Eqn{cost} to sparse linear regression. %If the regularizer $\rho(x)=\lVert x\rVert_1$, the penalty function $\psi_B(x)$ is also called the GMC penalty \cite{selesnick2017sparse}. 
We apply the GMC model in \cite{selesnick2017sparse} and the group GMC model in \cite{liu2023convex} for individual and grouped variable selection, respectively. That is, $y\in \Real^n$ \Eqn{cost} is the response vector, $A\in \Real^{n\times p}$ is the design matrix, and $x\in \Real^p$ is the vector of regression coefficients to estimate, which is assumed to have a sparse structure. For individual variable selection, the convex regularizer is  $\rho(x)=\lVert x\rVert_1$. For grouped variable selection, $\rho(x)$ is the $\ell_{2,1}$-norm defined as
\begin{equation}\label{eq:groupnorm}
\rho(x) = \sum_{j=1}^J \sqrt{p_j} \lVert x_{(j)} \rVert_2,
\end{equation}
where $x_{(j)}\in \Real^{p_j}$ is the subvector of $x$ corresponding to the $j$-th group of covariates. Note that $x = (x_{(1)}^\top ,x_{(2)}^\top ,\dots,x_{(J)}^\top )^\top $ and $\sum_{j=1}^J p_j = p$.

{\bf Problem Instance.} We consider a simulated data matrix with $n=2000$ samples and $p = 10000$ predictors.  Each row of the  design matrix $A \in \Real^{n \times p}$ is an i.i.d. draw from the multivariate normal distribution $N_p(0, \Sigma)$, where $\Sigma \in \Real^{p \times p}$'s $ij$-th entry $\sigma_{ij} = 0.3^{|i-j|}$. The true coefficient vector $x^* \in \Real^p$ has its first $50$ components equal to $1$, second $50$ components equal to $-1$, and the remaining entries equal to $0$. The response vector is then generated by $y = A x^* +  \epsilon$, where $\epsilon$ is the Gaussian noise with each component $\epsilon_i$ being an i.i.d. drawn from $\mathcal{N}(0, {x^*}\Tra \Sigma x^*)$ such that the signal-to-noise ratio is $1$. For grouped variable selection, we treat every $50$ consecutive predictors as one group.

% We first conduct simulation experiments to showcase the ability of AA in accelerating the computation of both GMC and group GMC. %Specifically, we apply DRS, FBS, FBFS on solving the relevant problems and compare their performance with their AA accelerated versions, respectively.
% We consider a high-dimensional problem with $n=2000$ and $p=10000$. We fix $s=50$, $\rho=0.3$, and $\sigma=\sqrt{{x^*}\Tra \Sigma x^*}$ such that the signal-to-noise ratio is $1$. For grouped variable selection, we use the same simulation setting but treat every $s$ consecutive predictors as one group. For both GMC and group GMC, the convexity-preserving parameter $\gamma$ is set to $0.8$.

\begin{figure*}[!t]
  \centering
  \includegraphics[width=0.9\textwidth]{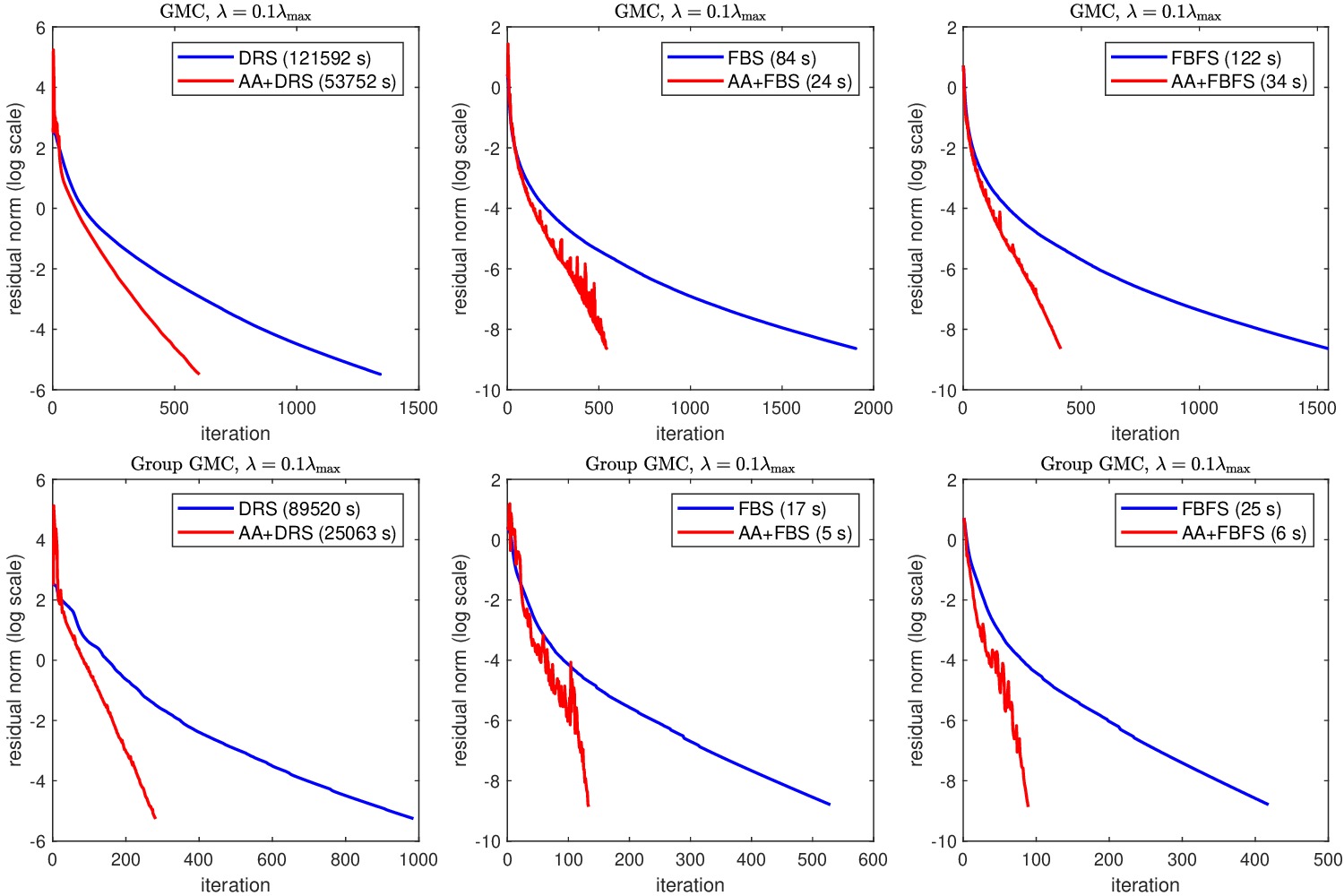}
  \caption{The residual norm trajectories of DRS, FBS, and FBFS in solving the GMC (top row) and the group GMC (bottom row) problems when $\lambda=0.1\lambda_{\max}$. Wall clock run times in seconds in parentheses next to method names in the legend boxes.}
  \label{fig:slr}
\end{figure*} 

{\bf Parameter Setting.} The convexity-preserving parameter $\gamma$ is set at $0.8$. We then apply DRS, FBS, FBFS, and their AA versions to solve the resulting GMC and group GMC problems. For DRS, we use the step size $\mu=0.01$; for FBS, we use the step size $\mu=1.99\times \min\{(1-\gamma)/\gamma,1\}/\lVert A\rVert_2^2$; for FBFS, we use the step size $0.99/(\lVert \begin{bmatrix} 1-\gamma & \gamma \\ -\gamma & \gamma\end{bmatrix} \rVert_2\lVert  A\rVert_2^2)$.

{\bf Statistical Performance.} We first demonstrate the promising statistical performance of the CNC regularization strategy for sparse linear regression and individual variable selection, we compare several accuracy metrics of GMC to three other commonly used penalized sparse estimation methods in statistics, including Lasso \cite{tibshirani1996regression}, SCAD \cite{fan2001variable}, and MCP \cite{zhang2010nearly}. We note that SCAD and MCP are folded concave penalties and the resulting optimization problems are nonconvex. Here we still use the same data generation protocol. We then employ 5-fold cross-validation to select the best regularization parameter for each method and run the experiments for $20$ replicates. We evaluate the accuracy of each method in the following aspects: (i) estimation error defined as $\lVert \hat{x} - x^* \lVert_2$; (ii)  prediction error defined as $\lVert A \hat{x} - A x^* \lVert_2$; (iii) support identification performance measured by the F1 score which falls in the range of $[0,1]$. A higher F1 score implies better variable selection performance. \Tab{cv} summarizes the results. GMC achieves the highest F1 score and lowest  prediction error. Its estimation error is also comparable to the Lasso's. For grouped variable selection, we refer users to \cite{liu2023convex} for a diverse array of experiments comparing group GMC with other popular grouped variable selection methods.

\begin{table}
\centering
\caption{Accuracy metrics of different sparse estimation methods}
\begin{tabular}{cccc}
%\\
\hline
  \\[-2ex]
Method &   Estimation error  & Prediction error & F1 score\\[0.5ex]
 \hline
   \\[-2ex]
 \multirow{1}{*}{Lasso}& $5.97$  & $275.33$  & $0.44$  \\
  \hline
    \\[-2ex]
 \multirow{1}{*}{SCAD} & $6.61$  & $298.72$ & $0.57$ \\
  \hline
   \multirow{1}{*}{MCP} & $7.22$  & $303.63$ & $0.68$ \\
  \hline
   \multirow{1}{*}{GMC} & $6.17$  & $247.74$ & $0.76$ \\
  \hline
\end{tabular}

\label{tab:cv}
\end{table}

{\bf Convergence Speed.} In \Fig{slr}, we visualize the residual norm trajectories of the compared methods. 
DRS is almost prohibitively slow compared to FBS and FBFS. This is because evaluating the resolvent of $\mathcal{P}$ in \eqref{eq:operators} involves solving a large-scale linear system (dimension is $2p=2\times 10^4$ in this example), which makes the computational cost per iteration of DRS dramatically higher than that of FBS and FBFS. For FBS and FBFS, AA helps reduce the number of iterations required approximately by a factor of $4$. Compared with FBS, FBFS is overall slower since it requires an additional forward step per iteration.

\begin{table}
\centering
\caption{Average computation time (in seconds) of a solution path for sparse linear regression
}
\begin{tabular}{cccc}
%\\
\hline
  \\[-2ex]
Problem &  Time (FBS) &  Time (FBS+AA)  & speed-up ratio\\[0.5ex]
 \hline
   \\[-2ex]
 \multirow{1}{*}{GMC}& $2128$  & $439$ & $4.85$\\
  \hline
    \\[-2ex]
 \multirow{1}{*}{Group GMC} & $592$  & $81$ & $7.31$\\
  \hline
\end{tabular}
\label{tab:path}
\end{table}

\Fig{slr} illustrates the numerical performance of \Alg{a2os} at a single value of $\lambda$. In practice, since the best choice of $\lambda$ is not known a priori, one may be more interested in computing the whole solution path. Given that FBS is the fastest splitting method in \Fig{slr}, we now focus on comparing the original FBS and Anderson accelerated FBS in computing the solution path of GMC and group GMC. The regularization parameter $\lambda$ takes on a sequence of $100$ values logarithmically spaced from $\lambda_{\max}$ to $0.001\lambda_{\max}$. Here $\lambda_{\max}$ is the smallest value of $\lambda$ such that \Eqn{cost} yields a zero solution. When $\rho(x) = \|x\|_1$, $\lambda_{\max} = \max_j \{|a_j^\top y|\}$, where $a_j$ is the $j$-th column of the design matrix $A$. When $\rho(x)$ is the $\ell_{2,1}$ norm, $\lambda_{\max} = \max_j \{\|A_{.j}^\top y\|_2/\sqrt{p_j}\}$, where $A_{.j}$ is the submatrix of $A$ whose columns correspond to
the variables in the $j$-th group. When computing the solution path, we initialize, or warm start, the algorithm with the obtained solution $(\hat{x},\hat{v})$ at the previous value of $\lambda$. \Tab{path} reports the average computation time over $20$ random replicates, using the previously described data generation protocol. FBS accelerated by AA is $4.85$ times faster 
than standard FBS for GMC and is $7.31$ times faster for group GMC.

\subsection{Regularized Matrix Regression/Completion}
Matrix completion \cite{mazumder2010spectral} and regression \cite{zhou2014regularized} are both classic machine learning problems in modern data science. For dealing with matrix data, spectral regularization is often employed to recover a low-rank matrix parameter. A spectral-regularized matrix regression problem can be posed as the following optimization problem.
\begin{equation}
\label{eq:Matreg}
    \underset{X\in \Real^{d_1\times d_2}}{\min}\quad \frac{1}{2} \sum_{i=1}^n \left(y_i - \langle A_i, X \rangle\right)^2 + \lambda \lVert X\rVert_*,
\end{equation}
where $n$ is the sample size, $y_i$ is the observed response, $A_i$ is the $i$-th matrix covariate, and $X$ is the matrix parameter to be estimated. Matrix completion is a special case of \Eqn{Matreg}. To recover the classic soft-Impute problem \cite{mazumder2010spectral}, we only need to consider $y_i$ as the observed entries of a matrix and $A_i$ to be a binary $d_1\times d_2$ matrix that selects the corresponding entry. For a convex-nonconvex version of the spectral-regularized matrix regression/completion problem, the convex regularizer is $\rho(X) = \lVert X\rVert_*$. The data fidelity term can be rewritten as $\frac{1}{2} \lVert y-A x\rVert_2^2$, where the $i$-th row of $A \in \Real^{n \times d_1d_2}$ is the transpose of the column-major vectorization of $A_i$, and $x$ is the column-major vectorization of $X$. %This way, we put the problem under the umbrella of framework \eqref{eq:minimax} and apply the proposed algorithms to solve the optimization problem.

{\bf Problem Instance.} We consider synthetic data to compare the performance of the traditional spectral-regularized matrix regression/completion model and its convex-nonconvex counter parts.  For matrix regression, the sample size is set to $n=1000$, and each covariate matrix $A_i$ is of size $64 \times 64$ with independent standard normal entries. We test two different matrix coefficients: the first true coefficient $X \in \Real^{64 \times 64}$ is binary and forms a cross shape as shown in the left panel of \Fig{X}; the second true coefficient matrix  $X \in \Real^{64 \times 64}$ is a checkerboard generated by the \texttt{checkerboard} function in Matlab and is shown in the right panel of \Fig{X}. The response is then simulated by $y_i = \langle A_i, X\rangle + \epsilon_i$, where $\epsilon_i$ is a standard normal noise. For matrix completion, we use the same low-rank matrix patterns but magnify the coefficient matrices 4-fold to become $256\times 256$. We then add standard normal noise such that the SNR is $1$, and mask $80\%$ of the entries.

\begin{figure}[htbp]
\centering
\includegraphics[width=1.7in]{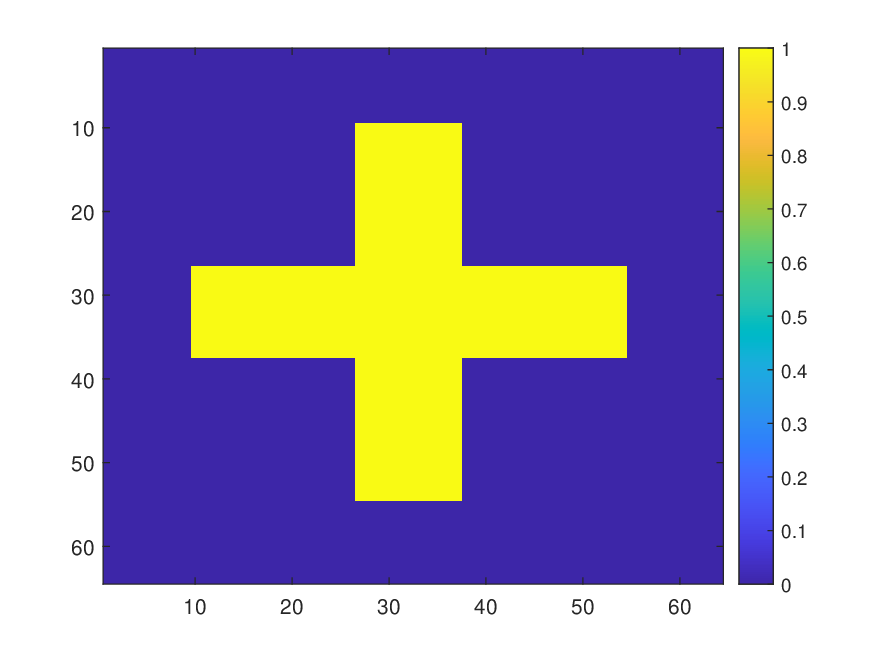}
\includegraphics[width=1.7in]{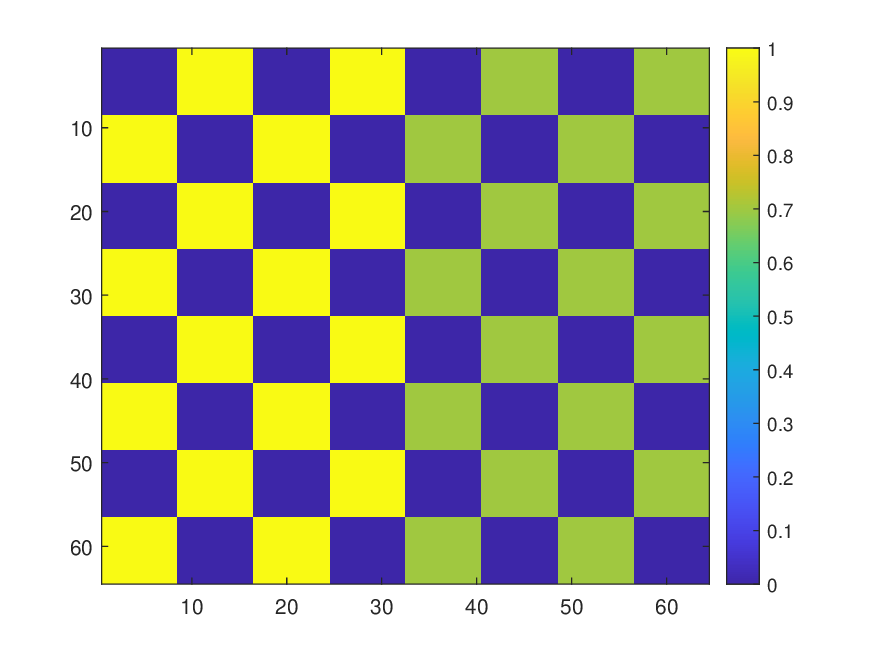}
\caption{A binary coefficient matrix in a cross shape (left panel) and a checkerboard coefficient matrix generated by the \texttt{checkerboard} function in Matlab (right panel).}
\label{fig:X}
\end{figure}

{\bf Parameter Setting.} The convexity-preserving parameter $\gamma$ is set at $0.8$. We apply FBS, FBFS, and their AA versions to solve the resulting CNC regularized matrix regression/completion problems. Though applicable, DR and its AA version are not included in the comparison due to their prohibitively  slow computational speed. For FBS, we again use the step size $\mu=1.99\times \min\{(1-\gamma)/\gamma,1\}/\lVert A\rVert_2^2$; for FBFS, we again set the step size at $0.99/(\lVert \begin{bmatrix} 1-\gamma & \gamma \\ -\gamma & \gamma\end{bmatrix} \rVert_2\lVert  A\rVert_2^2)$. Notice that in the case of matrix completion, we have $\lVert  A\rVert_2=1$. 

For each method, we compute a solution path with $\lambda$ taking on a sequence of $20$ values  logarithmically spaced between $10^{-1}$
to $10^3$. We employ BIC as provided in \cite{zhou2014regularized} for selecting the regularization parameter $\lambda$. To evaluate different methods, we consider the prediction performance measured by the prediction error $\lVert Ax - A\hat{x} \rVert_2$, where $\hat{x}$ is the vectorized version of the estimated coefficient matrix $\hat{X}$.

{\bf Statistical Performance.} Table \ref{tab:MatReg_perf} compares the prediction performance of the original spectral regularization and the proposed CNC regularization for matrix regression/completion under two different scenarios. 
CNC regularization consistently achieves higher accuracy than convex spectral regularization. We note that to obtain the the results of convex spectral regularization, we simply need to set $\gamma=0$ in our algorithm.

\begin{table}[htbp]
\centering
\caption{Accuracy metrics of different regularization methods for matrix regression/completion.}
\label{tab:MatReg_perf}
\begin{tabular}{cccc} 
\hline
Signal                        & Problem                     & Method & Prediction Error  \\ 
\hline
\multirow{4}{*}{~Cross}       & \multirow{2}{*}{Regression} & Convex &         52.82          \\  
                              &                             & CNC    &        16.12           \\ 
\cline{2-4}
                              & \multirow{2}{*}{Completion} & Convex &          43.84         \\
                              &                             & CNC    &         32.50          \\ 
\hline
\multirow{4}{*}{Checkerboard} & \multirow{2}{*}{Regression} & Convex &           50.27        \\
                              &                             & CNC    &        15.97          \\ 
\cline{2-4}
                              & \multirow{2}{*}{Completion} & Convex &            48.14       \\
                              &                             & CNC    &           33.01   \\
\hline
\end{tabular}
\end{table}

\begin{table}[htbp]
\centering
\caption{Average computation time (in seconds) of a solution path for regularized matrix regression/completion.}
\label{tab:MatReg_path}
\resizebox{0.45\textwidth}{!}{
\begin{tabular}{cccccc} 
\hline
Signal                        & Problem                     & Algorithm & Original & AA  & speed-up ratio  \\ 
\hline
\multirow{4}{*}{~Cross}       & \multirow{2}{*}{Regression} & FBS       & 585      & 269 & 2.17            \\
                              &                             & FBFS      & 817      & 334 & 2.45            \\ 
\cline{2-6}
                              & \multirow{2}{*}{Completion} & FBS       & 415      & 106 & 3.92            \\
                              &                             & FBFS      & 349      & 87 & 4.01            \\ 
\hline
\multirow{4}{*}{Checkerboard} & \multirow{2}{*}{Regression} & FBS       &   557       &   248  &    2.25             \\
                              &                             & FBFS      &     803     &   320  & 2.51                \\ 
\cline{2-6}
                              & \multirow{2}{*}{Completion} & FBS       &  417    & 137 &  3.04               \\
                              &                             & FBS       &   353   & 84  &   4.2              \\
\hline
\end{tabular}}
\end{table}

% \begin{table}[h]
% \centering
% \caption{Accuracy metrics of different regularization methods}
% \begin{tabular}{cccc}
% %\\
% \hline
%   \\[-2ex]
% Method &   Estimation error  & Prediction error \\[0.5ex]
%  \hline
%    \\[-2ex]
%  \multirow{1}{*}{Spectral}& $0.074$  & $57.80$   \\
%   \hline
%     \\[-2ex]
%  \multirow{1}{*}{CNC} & $0.020$  & $27.50$   \\
%   \hline
%   \hline
% \end{tabular}
% \label{tab:MatReg_perf}
% \end{table}

{\bf Convergence Speed.} Table \ref{tab:MatReg_path} summarizes the average computational time of each algorithm to solve the CNC regularized matrix regression/completion problems over 20 random replicates. We observe an interesting trade-off between FBS and FBFS. Recall that FBS only requires one forward step per iteration while FBFS requires two. However, FBFS allows for a larger step size at $\gamma=0.8$ and is able to converge in fewer iterations. See \Fig{step} for an illustration. Therefore, when the computational cost per iteration is denominated by the backward step, FBFS can out perform FBS. This intuition is corroborated in \Tab{MatReg_path}. For matrix regression, FBS is faster than FBFS since the forward step is more expensive than the backward step. For matrix completion, FBFS in fact outperforms FBS since the singular value thresholding step in the backward iteration is the main computational bottleneck. For both FBS and FBFS, applying AA can speed up the computation by a factor of 2 to 4 on the considered problems.  

\begin{figure}[htbp]
\centering
\includegraphics[width=0.5\textwidth]{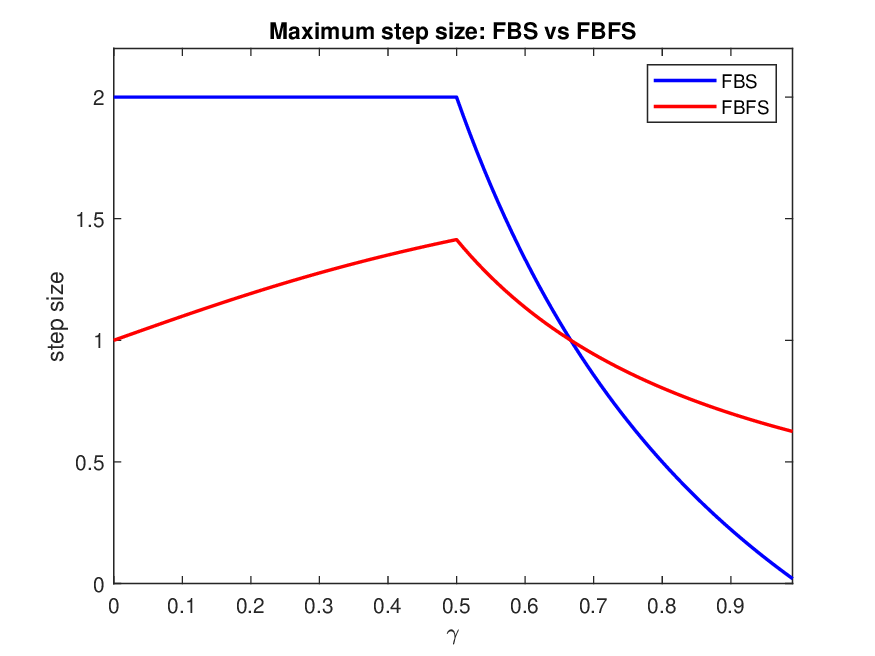}
\caption{Maximum step size of FBS and FBFS when $\lVert A \rVert_2=1$. }
\label{fig:step}
\end{figure}

\subsection{Sparse Group Lasso}\label{sec:sgl}
Sparse group lasso is an extension of the Lasso regression technique that incorporates sparsity at both the individual feature level and group level \cite{simon2013sparse}. This method is particularly useful in scenarios where predictors can be naturally grouped, enabling the selection of entire relevant groups of features while simultaneously enforcing sparsity within those groups. For a convex-nonconvex version of sparse group lasso, we operate within the framework of \Eqn{twominimax}. The first convex regularizer is $\rho_1(x)=\lVert x\rVert_1$. The second convex regularizer $\rho_2(x)$ is the $\ell_{2,1}$-norm defined in \Eqn{groupnorm}. Following standard practice for sparse group lasso problems, we maintain a fixed ratio in  
the two regularization parameters  by setting $\lambda_1 = \lambda, \lambda_2 = \alpha\lambda$. We use $\alpha=\frac{1}{19}$ in this section. 

{\bf Problem Instance.} We consider a DNA methylation data set \cite{klosa2020seagull}. The data set contains blood DNA methylation profiles at about 1.9 million CpG sites of 141 mice and their chronological age (in months). We use the following data preprocessing workflow, which has modest differences with the one in  \cite{klosa2020seagull}.  First, the variable groups are obtained using a singular value decomposition of the full data matrix. We then use sure-independence variable screening \cite{fan2008sure} to downselect the predictor set to a more manageable pool of 10,000 covariates. Next, the data set is split into a training set ($n=75$) and a validation set ($n=66$). 
 The standardized design matrix is denoted as $\tilde{A}_{\text{train}}$ and the centered response is denoted by $\tilde{y}_{\text{train}}$.
The column standard deviations are denoted by $s_{\text{train}}$. Let $\tilde{x}$ denote the coefficient estimate obtained from applying a sparse group lasso model to $(\tilde{A}_{\text{train}},\tilde{y}_{\text{train}})$. The chronological age in the validation set is predicted as $\hat{y}_{\text{val}} = A_{\text{val}}(\tilde{x}\oslash s_{\text{train}})$, where $\oslash$ denotes elementwise division. We measure the goodness of fit using $R^2_{\text{val}} = \text{corr}^2(\hat{y}_{\text{val}},y_{\text{val}})$.

{\bf Parameter Setting.} We use $\gamma=0$ and $0.8$. Again, when $\gamma=0$, \Eqn{twominimax} reduces to the standard convex sparse group lasso. For each value of $\gamma$, we compute a solution path of length 100 starting from $\lambda = 10^{-0.2}\lambda_{\max}$ and ending at $\lambda = 10^{-3}\lambda_{\max}$, with $\lambda$ values logarithmically spaced and $\lambda_{\max}=\max_j \{|a_j^\top y|\}$. We consider only DYS in this section since it is the only splitting method capable of handling two convex regularizers. We set the step size at $\mu=1.99\times \min\{(1-\gamma)/\gamma,1\}/\lVert  \tilde{A}_{\text{train}}\rVert_2^2$.
\begin{figure}[htbp]
\centering
\includegraphics[width=0.5\textwidth]{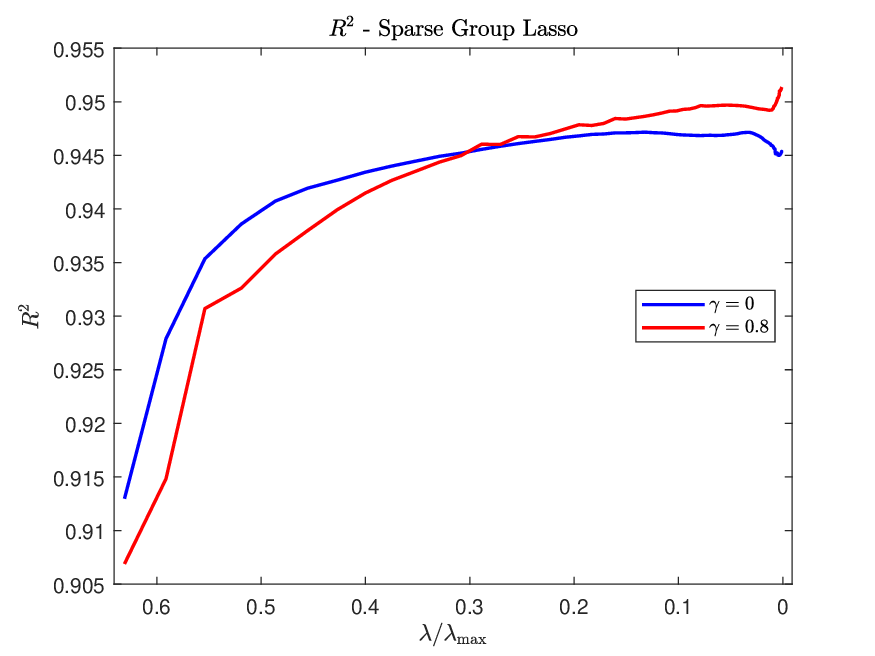}
\caption{$R^2$ on the validation set as $\lambda/\lambda_{\max}$ varies for sparse group lasso. }
\label{fig:R2sgl}
\end{figure}

\begin{table}[h]
\centering
\caption{Time for computing the solution path for sparse group lasso using DYS, with and without AA.}
\begin{tabular}{cccc}
%\\
\hline
  \\[-2ex]
$\gamma$ &  Original &  AA  & speed-up ratio\\[0.5ex]
 \hline
   \\[-2ex]
0 & $163$  & $43$ & $3.79$\\
  \hline
    \\[-2ex]
0.8 & $251$  & $130$ & $1.93$\\
  \hline
\end{tabular}
\label{tab:timesgl}
\end{table}

{\bf Statistical Performance.} \Fig{R2sgl} shows how the $R^2$ computed on the validation data set varies as $\lambda/\lambda_{\max}$ varies from $10^{-0.2}$ to $10^{-3}$. We see that in this problem, prediction power is generally better when the penalty parameter is relatively small. From the right side of \Fig{R2sgl}, we see that the best attainable $R^2$ of CNC ($\gamma=0.8$) is modestly better than its convex counterpart ($\gamma=0$). The improvement may be attributed to the fact that CNC induces less shrinkage bias on the the regression coefficients. 

{\bf Convergence Speed.} In terms of computation time, AA achieves a $3.79$-fold speed-up for the convex case ($\gamma=0$) and a $1.93$-fold speed up for the convex-nonconvex case ($\gamma=0.8$). In our experiments, we observe that for this problem, the safeguarding steps appears to be invoked much more often than in the previous experiments.  \Fig{dys_resnorm} shows a plot comparing the residual norm trajectories of the original DYS iterations, naive AA \Eqn{aa2}, and our regularized and safeguarded AA (\Alg{a2os}) with $\lambda = 10^{-0.2} \lambda_{\max}$.  \Alg{a2os} outperforms naive AA, which suggests that the regularization and safeguarding steps not only ensures the theoretical global convergence but also improves the convergence performance in practice.
\begin{figure}[htbp]
\centering
\includegraphics[width=0.5\textwidth]{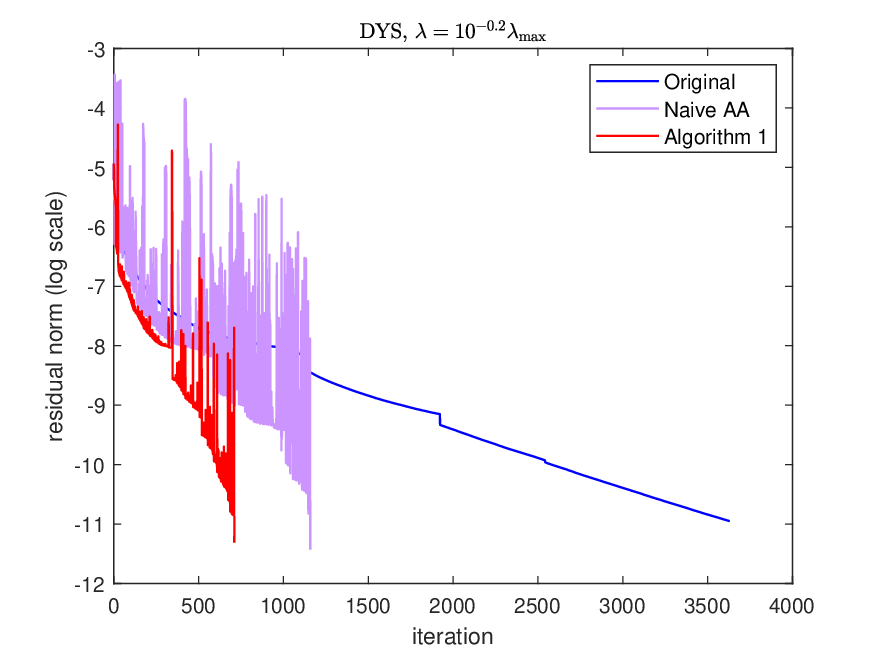}
\caption{The residual norm trajectories of DYS using the original FPI, naive AA, and regularized and safeguarded AA (Algorithm 1) at $\lambda = 10^{-0.2} \lambda_{\max}$. }
\label{fig:dys_resnorm}
\end{figure}
\section{Discussion}\label{sec:discuss}

%In this paper, we investigated several splitting methods and Anderson acceleration applied to convex-nonconvex regularization problems. 
Motivated by the wide application of CNC regularization but limited investigation on its computation, in this paper, we proposed a unified framework of Anderson acceleration that covers Douglas-Rachford splitting, forward-backward splitting, forward-backward-forward splitting, and Davis-Yin splitting. This framework proves to be highly effective in solving a diverse set of CNC problems, such as sparse linear regression, spectral regularized matrix regression/completion, and sparse group lasso.  Simulation studies and real data applications showed that  the proposed framework was able to significantly speed up the computation of these CNC problems. The proposed accelerated splitting methods come with global convergence guarantees to global minima, which is a key advantage of CNC regularization. Moreover, the assumptions required to establish global convergence are not specific to CNC regularization. Consequently, the proposed framework applies to a much broader range of problems.

There are many potential directions for future investigation. First, the convergence result of the proposed acceleration framework relies on the averagedness property of the FPI. It would be interesting to explore whether this framework can be extended to FPIs that do not possess the averagedness property, for instance, the forward-reflected-backward splitting method proposed by \cite{malitsky2020forward}. Second, in this paper we have focused on problems where the data fidelity term is a least squares objective. Chen et al.\@ \cite{chen2023unified} generalized the CNC framework to include a Poisson response. Their difference-of-convex algorithm, however, requires solving many subproblems iteratively. It is of interest that whether the problem of \cite{chen2023unified} can be placed in an operator splitting/variation inequality framework and solved more efficiently. Additionally, difference-of-convex iterations is a form of Majorization Minorization algorithm, which should also be amenable to acceleration.

\appendix
\subsection{Proof of \Prop{satisfy}}\label{sec:assumptionproof}
\begin{proof}

$\mathcal{P}$ and $\mathcal{Q}$ defined in \Eqn{operators} are maximal monotone since $\mathcal{P}$ is single-valued and $\mathcal{Q}$ is the subdifferential of a convex function. $\mathcal{P}+\mathcal{Q}$ is maximal monotone since $\mathcal{P}$ and $\mathcal{Q}$ are maximal monotone and $\text{dom}\;\mathcal{P} = \Real^{2p}$ (see  \cite[Theorem 24.4 (i)]{bauschke2011convex}).  A proof that $\mathcal{P}$ is $\beta$-coercive can be found in \cite[Proposition 15]{selesnick2017sparse}. Here we fill in some essential details. 

According to \cite[Proposition 15]{selesnick2017sparse}, as long as 
\begin{align}
\begin{split}\label{eq:psd}
\left( \left( \begin{bmatrix}
1-\gamma & 0 \\
0  & \gamma
\end{bmatrix}-\beta_1 \begin{bmatrix}
1-2\gamma+2\gamma^2 & \gamma-2\gamma^2 \\
\gamma-2\gamma^2  & 2\gamma^2
\end{bmatrix}\right)\Kron I_p\right) & \succeq  0, \\
I_p -\beta_2A^\top A & \succeq  0,
\end{split}
\end{align}
then $\mathcal{P}$ is cocoercive. 

The second inequality in \Eqn{psd} is satisfied if $\beta_2\le 1/\lVert  A\rVert_2^2$. The first inequality is satisfied if 
$$
M_{\beta_1} = \begin{bmatrix}
1-\gamma & 0 \\
0  & \gamma
\end{bmatrix}-\beta_1 \begin{bmatrix}
1-2\gamma+2\gamma^2 & \gamma-2\gamma^2 \\
\gamma-2\gamma^2  & 2\gamma^2
\end{bmatrix} \succeq  0 
$$

The $2\times 2$ symmetric real matrix $M_{\beta_1}$ is positive semidefinite if and only if 
\begin{eqnarray}
\label{eq:Mbeta1_inequalities}
\text{trace}(M_{\beta_1}) \geq 0 \quad\text{and}\quad \det(M_{\beta_1}) \ge 0.
\end{eqnarray}

%$$
%\begin{cases}
%\text{trace}(M_{\beta_1})&\ge 0\\
%\det(M_{\beta_1}) &\ge 0
%\end{cases}.
%$$
The first inequality in \eqref{eq:Mbeta1_inequalities} is equivalent to
\begin{eqnarray}
\label{eq:bound1}
1 - \beta_1(1-2\gamma+4\gamma^2) \ge 0,
\end{eqnarray}
and the second inequality in \eqref{eq:Mbeta1_inequalities} is equivalent to
\begin{eqnarray}
\label{eq:bound2}
\gamma\beta_1^2-\beta_1+(1-\gamma) \ge 0 .
\end{eqnarray}
%$$
%\begin{cases}
%1 - \beta_1(1-2\gamma+4\gamma^2)&\ge 0\\
%\gamma\beta_1^2-\beta_1+(1-\gamma) &\ge 0 
%\end{cases}.
%$$
The roots of the quadratic function in \eqref{eq:bound2} are
$$
x_1 = \frac{1-|2\gamma-1|}{2\gamma} \quad\text{and} \quad x_2 = \frac{1+|2\gamma-1|}{2\gamma}.
$$
The inequality in \eqref{eq:bound2} is statisfied if $\beta_1\le x_1$. Note that $x_1$ can be written as $x_1 = \min\{1,\frac{1-\gamma}{\gamma}\}$. We next show that the inequality $\beta_1 \le \min\{1,\frac{1-\gamma}{\gamma}\}$ implies $\beta_1\le r(\gamma) = \frac{1}{1-2\gamma+4\gamma^2}$ which is equivalent to the inequality in \eqref{eq:bound1}. Specifically, we show that $r(\gamma) \geq \max \left\{ 1, \frac{1 - \gamma}{\gamma}\right\}$.

It is straightforward to verify that $r(\gamma)^{-1} \leq 1$ when $\gamma \in [0, 0.5)$. Consequently, $r(\gamma) \geq 1$ for $\gamma \in [0, 0.5)$.
%When $0\le \gamma\le 0.5$, we have
%$$
%(1-2\gamma+4\gamma^2) = 4(\gamma-\frac{1}{4})^2+\frac{3}{4}, 
%$$
%whose maximum occurs at 0 or 0.5 with maximal value 1. 
%Therefore when $0\le \gamma\le 0.5$, $(1-2\gamma+4\gamma^2)\le 1$.
When $\gamma \in [0.5, 1]$, we show that 
\begin{eqnarray}
\label{eq:bound3}
r(\gamma) \ge \frac{1-\gamma}{\gamma}.
\end{eqnarray}
The inequality in \eqref{eq:bound3} is equivalent to
\begin{eqnarray}
\label{eq:bound4}
a(\gamma) = 4\gamma^3 -6\gamma^2+4\gamma -1\ge 0.
\end{eqnarray}
Note that
$$
a'(\gamma) = 12\gamma^2 -12\gamma + 4 = 12\left(\gamma-\frac{1}{2}\right)^2 + 1 >0.
$$
Thus, $a(\gamma)$ is increasing on $[0.5, 1]$ and since $a(0.5) = 0$ the inequality in \eqref{eq:bound4} holds. 
%Thus, for $0.5<\gamma<1$, $a(\gamma)>a(1)=0$ and we can arrive at the conclusion that $\beta_1 \le \min\{1,\frac{1-\gamma}{\gamma}\}$ implies $\beta_1\le \frac{1}{1-2\gamma+4\gamma^2}$. 
Thus $\mathcal{P}$ is $\beta$-cocoercive with $\beta = \beta_1\beta_2 = \frac{\min\left\{1,\frac{1-\gamma}{\gamma}\right\}}{\lVert  A\rVert_2^2}$.

Since $\mathcal{P}$ is an affine operator, it is Lipschitz and the 
the Lipschitz constant of $\mathcal{P}$ is simply the operator norm of $P$. We have
$$
\lVert P \rVert_2 = \left\lVert \begin{bmatrix} 1-\gamma & \gamma \\ -\gamma & \gamma\end{bmatrix} \right\rVert_2\lVert  A\rVert_2^2,
$$
due to the fact that the operator norm of the Kronecker product is the product of operator norm. Thus $\mathcal{P}$ is $L$-Lipschitz with $L=\left\lVert \begin{bmatrix} 1-\gamma & \gamma \\ -\gamma & \gamma\end{bmatrix} \right\rVert_2\lVert  A\rVert_2^2$. 

\end{proof}
\begin{comment}
\subsection{Proof of \Lem{hk}}\label{sec:lemma1}
\begin{proof}
This equation is equivalent to
$$
z^{k+1} = z^k - g^k - (S_k-Y_k)\zeta^k,
$$
which is equivalent to
\begin{align}
\begin{split}\label{eq:zkp1}
z^{k+1} &= F(z^k) - \sum_{j=0}^{M^k-1} (s^{k-M^k+j} - y^{k-M^k+j})\zeta^k_j \\
& = F(z^k) - \sum_{j=0}^{M^k-1} (F(z^{k-M^k+j+1})-F(z^{k-M^k+j}))\zeta^k_j.
\end{split}
\end{align}

The right hand side of \Eqn{zkp1} can be rearranged as:
\begin{align*}
\text{RHS} = F(z^{k-M^k})\zeta^k_0+\sum_{j=1}^{M^k-1}F(z^{k-M^k+j})(\zeta^k_j-\zeta^k_{j-1}) + F(z^k)(1-\zeta^k_{M^k-1}),
\end{align*}
which is exactly the definition of $z^{k+1}$. 
\end{proof}
\end{comment}

\subsection{Proof of \Thm{aaconverge}}\label{sec:convergeproof}

Our proof follows the main logic of \cite[Theorem 4.3]{fu2020anderson}. The theoretical contribution here lies in generalizing \cite[Theorem 4.3]{fu2020anderson} to include several other common splitting methods. In addition, we found and corrected an error in the original safeguarding scheme. More specifically,
\cite{fu2020anderson} invokes the safeguard periodically. We find that in order for the arguments to hold, the safeguard must be excuted for every iteration.

But before we present our proof, we discuss  results and facts the argument uses.

First note that \eqref{eq:alphakformula} implies the following relationship between $z^{k+1}$, $z^{k}$, and $g^k$ in the regularized AA scheme \Eqn{aa2ucreg}:
$$
z^{k+1} = z^k - H_k g^k,
$$
where
$$
H_k = I+(S_k-Y_k)(Y_k^\top Y_k + \eta(\lVert S_k\rVert_{\text{F}}^2+\lVert Y_k\rVert_{\text{F}}^2)I)^{\dagger} Y_k^\top.
$$

%\begin{lemma}\label{lem:hk}
%\textit{
%The following holds for $z^{k+1}$, $z^{k}$, and $g^k$ in the regularized AA scheme \Eqn{aa2ucreg}:
%$$
%z^{k+1} = z^k - (I+(S_k-Y_k)(Y_k^\top Y_k + \eta(\lVert S_k\rVert_{\text{F}}^2+\lVert %Y_k\rVert_{\text{F}}^2)I)^{\dagger} Y_k^\top )g^k.
%$$}
%\end{lemma}

\begin{lemma}\label{lem:hk2norm}
\textit{
The matrix $H_k$ satisfies $\lVert H_k\rVert_2\le 1+\frac{2}{\eta}$.}
\end{lemma}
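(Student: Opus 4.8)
The plan is to bound $H_k$ by isolating the identity and controlling the remaining low-rank correction by submultiplicativity of the spectral norm. Writing $c = \eta(\lVert S_k\rVert_{\text{F}}^2 + \lVert Y_k\rVert_{\text{F}}^2)$, we have $H_k = I + (S_k - Y_k)(Y_k^\top Y_k + cI)^{\dagger} Y_k^\top$, so by the triangle inequality it suffices to establish that $\lVert (S_k - Y_k)(Y_k^\top Y_k + cI)^{\dagger} Y_k^\top\rVert_2 \le 2/\eta$.

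First I would dispose of the degenerate case. If $c = 0$, then $\lVert S_k\rVert_{\text{F}} = \lVert Y_k\rVert_{\text{F}} = 0$, hence $S_k = Y_k = 0$, the correction term vanishes, $H_k = I$, and the bound holds trivially. Thus I may assume $c > 0$, in which case $Y_k^\top Y_k + cI \succeq cI \succ 0$ is invertible, its pseudoinverse coincides with its inverse, and $\lVert (Y_k^\top Y_k + cI)^{-1}\rVert_2 \le 1/c$ since the smallest eigenvalue of $Y_k^\top Y_k + cI$ is at least $c$.

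Next, applying submultiplicativity gives $\lVert (S_k - Y_k)(Y_k^\top Y_k + cI)^{-1} Y_k^\top\rVert_2 \le \lVert S_k - Y_k\rVert_2\,\lVert (Y_k^\top Y_k + cI)^{-1}\rVert_2\,\lVert Y_k\rVert_2$. I would bound the two outer factors using $\lVert \cdot\rVert_2 \le \lVert \cdot\rVert_{\text{F}}$ together with the triangle inequality, namely $\lVert S_k - Y_k\rVert_2 \le \lVert S_k\rVert_{\text{F}} + \lVert Y_k\rVert_{\text{F}}$ and $\lVert Y_k\rVert_2 \le \lVert Y_k\rVert_{\text{F}} \le \lVert S_k\rVert_{\text{F}} + \lVert Y_k\rVert_{\text{F}}$. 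Combined with the middle factor $1/c$, this yields the estimate $(\lVert S_k\rVert_{\text{F}} + \lVert Y_k\rVert_{\text{F}})^2 / \bigl(\eta(\lVert S_k\rVert_{\text{F}}^2 + \lVert Y_k\rVert_{\text{F}}^2)\bigr)$.

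Finally, the elementary inequality $(a+b)^2 \le 2(a^2 + b^2)$ applied with $a = \lVert S_k\rVert_{\text{F}}$ and $b = \lVert Y_k\rVert_{\text{F}}$ cancels the denominator up to the factor $2$, delivering $2/\eta$ and completing the proof. The only genuinely conceptual point is the very first step: recognizing that the adaptive scaling $c = \eta(\lVert S_k\rVert_{\text{F}}^2 + \lVert Y_k\rVert_{\text{F}}^2)$ is precisely what renders the ratio scale-invariant, so that a uniform bound independent of the iterate history holds. Every remaining step is a routine norm estimate, so I do not expect any serious obstacle beyond bookkeeping of the Frobenius-versus-spectral norm comparisons.
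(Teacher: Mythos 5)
Your proof is correct, and it is worth noting that the paper itself does not actually prove this lemma: its ``proof'' is a one-line citation to Lemma 4.2 of the Fu--Zhang--Boyd paper on Anderson accelerated Douglas--Rachford splitting. Your argument therefore supplies a self-contained derivation of the cited bound, and each step checks out: the degenerate case $c=0$ forces $S_k=Y_k=0$ and hence $H_k=I$; for $c>0$ the matrix $Y_k^\top Y_k + cI$ is positive definite with smallest eigenvalue at least $c$, so the pseudoinverse is the inverse and has spectral norm at most $1/c$; submultiplicativity together with $\lVert \cdot \rVert_2 \le \lVert \cdot \rVert_{\text{F}}$ and the triangle inequality gives the estimate $(\lVert S_k\rVert_{\text{F}}+\lVert Y_k\rVert_{\text{F}})^2/\bigl(\eta(\lVert S_k\rVert_{\text{F}}^2+\lVert Y_k\rVert_{\text{F}}^2)\bigr)$; and $(a+b)^2 \le 2(a^2+b^2)$ finishes. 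Your closing observation is also the right one conceptually: the adaptive regularizer is exactly what makes the correction term scale-invariant, which is why a bound independent of the iterate history is possible. One small remark: by grouping the middle two factors and bounding the singular values of $(Y_k^\top Y_k + cI)^{-1}Y_k^\top$ directly --- each singular value has the form $\sigma/(\sigma^2+c) \le 1/(2\sqrt{c})$ by AM--GM --- you would obtain the sharper bound $\lVert H_k \rVert_2 \le 1 + 1/\sqrt{2\eta}$, which is much smaller than $1+2/\eta$ for the small values of $\eta$ used in practice (the paper takes $\eta = 10^{-2}$). This sharpening is not needed anywhere in the convergence proof of Theorem 3.1, since only finiteness of the constant matters there, but it costs nothing and tightens the quasi-Fej\'er error terms.
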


\begin{proof}
%When $\lVert S_k\rVert_{\text{F}}^2=\lVert Y_k\rVert_{\text{F}}^2=0$, we have $S_k=0$ and $Y_k=0$, this conclusion is trivial since $H_k=I$. When $\lVert S_k\rVert_{\text{F}}^2+\lVert Y_k\rVert_{\text{F}}^2>0$, we have $(Y_k^\top Y_k + \eta(\lVert S_k\rVert_{\text{F}}^2+\lVert Y_k\rVert_{\text{F}}^2)I)^{\dagger} = (Y_k^\top Y_k + \eta(\lVert S_k\rVert_{\text{F}}^2+\lVert Y_k\rVert_{\text{F}}^2)I)^{-1}$, thus
%\begin{align*}
%\lVert H_k\rVert_2 &\le 1+ \frac{\lVert S_k-Y_k\rVert_2\lVert Y_k\rVert_2}{\eta(\lVert S_k\rVert_{\text{F}}^2+\lVert Y_k\rVert_{\text{F}}^2) } \le 1+\frac{\lVert S_k-Y_k\rVert_{\text{F}}\lVert Y_k\rVert_{\text{F}}}{\eta(\lVert S_k\rVert_{\text{F}}^2+\lVert Y_k\rVert_{\text{F}}^2)}\\&\le 1+ \frac{\lVert S_k\rVert_{\text{F}}\lVert Y_k\rVert_{\text{F}}+\lVert Y_k\rVert_{\text{F}}^2}{\eta(\lVert S_k\rVert_{\text{F}}^2+\lVert Y_k\rVert_{\text{F}}^2)}\le 1+ \frac{2}{\eta},
%\end{align*}
%where the second inequality is due to the fact that $\lVert A\Vert_2\le\lvert A\Vert_F$ %and the last inequality is due to the inequality
%\begin{equation}\label{eq:abeq}
%\frac{ab+b^2}{a^2+b^2}\le 2 \text{  if  } a^2+b^2>0
%\end{equation}
%To see that \Eqn{abeq} is true, multiply the both sides of \Eqn{abeq} with $a^2+b^2$, then we have
%$$
%2a^2-ab+b^2 = 2(a-\frac{1}{4}b)^2 + \frac{7}{8}b^2 \ge 0.
%$$
See  \cite[Lemma 4.2]{fu2020anderson}. 
\end{proof}
\begin{definition}
A sequence $\{z^k\} \subset \Real^{2p}$ is quasi-Fej\'er monotone with respect to a non-empty target set $C\subset \Real^{2p}$,  if for any $z\in C$, there exisits a nonnegative and summable sequence $\epsilon_k$, such that for any $k\ge 0$, we have
$$
\lVert z^{k+1} - z\rVert_2^2\le \lVert z^{k} - z\rVert_2^2 + \epsilon_k.
$$
\end{definition}

\begin{lemma}\label{lem:quasi}
Let $\{z^k\}$ be a quasi-Fej\'er monotone sequence with respect to an non-empty target set $C\subset \Real^{2p}$, then $\{z^k\}$ converges to a point in $C$ if and only if for any limit point $z$ of $\{z^k\}$, we have $z\in C$. 
\end{lemma}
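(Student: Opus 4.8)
The plan is to prove the two implications separately, with essentially all of the content living in the ``if'' direction. The forward implication is immediate: if $z^k \to z^* \in C$, then $z^*$ is the unique limit point of $\{z^k\}$, and it lies in $C$ by assumption, so every limit point is in $C$.

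For the reverse implication I would first extract from the quasi-Fej\'er inequality, together with the summability of $\epsilon_k$, that for \emph{every} fixed $z \in C$ the real sequence $a_k := \lVert z^k - z\rVert_2^2$ converges. The standard device is to set $b_k := a_k - \sum_{j=0}^{k-1}\epsilon_j$; the inequality $a_{k+1} \le a_k + \epsilon_k$ then gives $b_{k+1} \le b_k$, so $\{b_k\}$ is nonincreasing, and since $a_k \ge 0$ while $\sum_j \epsilon_j < \infty$, the sequence $\{b_k\}$ is bounded below and hence converges. Consequently $a_k = b_k + \sum_{j=0}^{k-1}\epsilon_j$ converges as the sum of two convergent sequences.

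Next I would use that $C$ is nonempty: fixing any $z \in C$, the sequence $\{a_k\}$ converges and is therefore bounded, so $\{z^k\}$ is a bounded sequence in $\Real^{2p}$ and, by Bolzano--Weierstrass, admits at least one limit point $z^*$. By the standing hypothesis $z^* \in C$, so I may reapply the convergence result of the previous step with $z = z^*$, giving that $\lVert z^k - z^*\rVert_2^2$ converges. Along the subsequence $z^{k_j} \to z^*$ this sequence tends to $0$, so its (unique) limit is $0$; hence $\lVert z^k - z^*\rVert_2 \to 0$, i.e.\ $z^k \to z^* \in C$.

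The only genuinely technical step is establishing convergence of $a_k$ from the quasi-Fej\'er inequality; the remainder is a short chain of standard facts, namely Bolzano--Weierstrass together with the observation that a convergent real sequence possessing a subsequence tending to $0$ must itself tend to $0$. I would emphasize that the nonemptiness of $C$ is indispensable here: it is precisely what forces $\{z^k\}$ to be bounded, guaranteeing the existence of a limit point to which the hypothesis can be applied; without it the hypothesis could hold vacuously while the sequence fails to converge.
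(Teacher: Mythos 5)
Your proof is correct. Note, however, that the paper does not actually prove this lemma at all: its ``proof'' consists of a single citation to \cite[Theorem~3.8]{combettes2001quasi}, so your self-contained argument is a genuine addition rather than a parallel to an in-paper derivation. What you give is the standard argument underlying that cited result: shifting $a_k = \lVert z^k - z\rVert_2^2$ by the partial sums of $\epsilon_k$ to obtain a nonincreasing sequence that is bounded below (hence convergent), which yields convergence of $a_k$ for each fixed $z \in C$; boundedness of $\{z^k\}$ and Bolzano--Weierstrass to produce a limit point; the standing hypothesis to place that limit point in $C$; and finally the observation that a convergent sequence $\lVert z^k - z^*\rVert_2^2$ with a subsequence tending to $0$ must tend to $0$. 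One subtlety you handle correctly, worth making explicit: in the paper's definition the summable sequence $\epsilon_k$ is allowed to depend on the anchor $z \in C$, and your argument only ever uses the inequality for one fixed $z$ at a time (first the arbitrary $z \in C$ giving boundedness, then $z = z^*$), so this dependence causes no difficulty. The trade-off between the two routes is the usual one: the citation keeps the paper short and appeals to a more general result (Combettes proves this for quasi-Fej\'er sequences in Hilbert spaces with several variants of the definition), while your proof makes the paper self-contained at the cost of a dozen lines and covers exactly the finite-dimensional case the paper needs.
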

\begin{proof}
See  \cite[Theorem 3.8]{combettes2001quasi}. 
\end{proof}

We are now ready to prove \Thm{aaconverge}.

\begin{proof}

Let $\ell_j$ denote the iteration where $z^{k+1}_{\text{OS}}$ is adopted the $j$th time. Let $k_i$ denote the iteration where the safeguard check in line 13 or 15 of \Alg{a2os} is passed for the $i$th time. Suppose that after $k-1$ iterations, the unaccelerated iterate has been adopted $j-1$ times and the safeguard check has passed $i-1$ times. Then at the $k$th iteration, either $k=\ell_j$ or $k=k_i$. We first establish that the Anderson acceleration candidate $z^{k+1}_{\text{AA}}$ is adopted  infinitely often. If $z^{k+1}_{\text{AA}}$ is adopted finitely many times, the algorithm reduces to original fixed-point iterations from some point onward. For DRS, FBS, and DYS, this implies $\underset{k\rightarrow \infty}{\lim} \lVert g^{k} \rVert_2 = 0$. For FBFS, this implies $\underset{k\rightarrow \infty}{\lim} \lVert g^{k}_{\text{FBS}} \rVert_2 = 0$ (see \cite[Theorem 25.8 (i)]{bauschke2011convex}). Thus the safeguard checks must eventually pass, and we arrive at a contradiction. 

For DRS, FBS, and DYS, consider any $w\in \Real^{2p}$, since $z^{k+1}_{\text{AA}}$ is adopted at $k_i$, 
\begin{align}
\begin{split}\label{eq:distbd}
\lVert z^{k_i+1} - w\rVert_2  
& \le \lVert z^{k_i} - w\rVert_2+\lVert z^{k_i}-z^{k_i+1} \rVert_2\\
& = \lVert z^{k_i} - w\rVert_2+\lVert H_{k_i}g^{k_i} \rVert_2\\
&\le \lVert z^{k_i} - w\rVert_2 + \left(1+\frac{2}{\eta}\right)\lVert g^{k_i}\rVert_2\\
& \le \lVert z^{k_i} - w\rVert_2+ \left(1+\frac{2}{\eta}\right)D\lVert g^0\rVert_2(i+1)^{-(1+\epsilon)}.
\end{split}
\end{align}
The first inequality is due to triangle inequality. The second inequality is due to \Lem{hk2norm}. The third inequality is due to the fact that the safeguard check is passed. 

For FBFS, $\mathcal{I}-\mu \mathcal{P}$ is 2-Lipschitz due to the $L$-Lipschitzness of $\mathcal{P}$ and the fact that $\mu<1/L$. We can then bound $\lVert g^{k_i} \rVert_2$ as
\begin{align} \label{eq:fbfgkbound}
\begin{split}
\lVert g^{k_i} \rVert_2 & =  \lVert (\mathcal{I}-\mu \mathcal{P})z^{k_i} - (\mathcal{I}-\mu \mathcal{P}) z^{k_i+1}_{\text{FBS}}\rVert_2 \\
& \le 2 \lVert z^{k_i} - z^{k_i+1}_{\text{FBS}}\rVert_2 = 2 \lVert g^{k_i}_{\text{FBS}} \rVert_2.
\end{split}
\end{align}
Thus for FBFS, we similarly have
\begin{align}
\begin{split}\label{eq:distbdfbf}
\lVert z^{k_i+1} - w\rVert_2  
&\le \lVert z^{k_i} - w\rVert_2 + \left(1+\frac{2}{\eta}\right)\lVert g^{k_i}\rVert_2\\
&\le \lVert z^{k_i} - w\rVert_2 + 2\left(1+\frac{2}{\eta}\right)\lVert g^{k_i}_{\text{FBS}}\rVert_2\\
& \le \lVert z^{k_i} - w\rVert_2+ \left(1+\frac{2}{\eta}\right)D\lVert g^0\rVert_2(i+1)^{-(1+\epsilon)}.
\end{split}
\end{align}

 Now, for DRS, consider $z^* \in \text{Fix} \; F_{\text{DRS}}$, by \cite[Proposition 4.25 (iii)]{bauschke2011convex}, we have
\begin{equation}
\label{eq:distbd2dr}
    \lVert z^{l_{j}+1} - z^*\rVert_2^2 \le \lVert z^{l_{j}} - z^*\rVert_2^2 -  \lVert g^{l_j} \rVert_2^2\le \lVert z^{l_{j}} - z^*\rVert_2^2.
\end{equation} 

For FBS, according to  \cite[Theorem 25.8]{bauschke2011convex}, the FPI $F_{\text{FBS}}$ is $\alpha_1 = \frac{1}{\delta}$ averaged, where $\delta=\min\{1,\beta/\mu\}+1/2>1$. Consider $z^* \in \text{Fix} \; F_{\text{FBS}}$, we similarly have 
\begin{equation}
\label{eq:distbd2fb}
    \lVert z^{l_{j}+1} - z^*\rVert_2^2 \le \lVert z^{l_{j}} - z^*\rVert_2^2 -  \frac{1-\alpha_1}{\alpha_1}\lVert g^{l_j} \rVert_2^2\le \lVert z^{l_{j}} - z^*\rVert_2^2.
\end{equation}

For DYS, consider $z^* \in \text{Fix} \; F_{\text{DYS}}$, due to  \cite[Proposition 3.1]{davis2017three}, we have
\begin{equation}
\label{eq:distbd2dy}
    \lVert z^{l_{j}+1} - z^*\rVert_2^2 \le \lVert z^{l_{j}} - z^*\rVert_2^2 -  \frac{1-\alpha_2}{\alpha_2}\lVert g^{l_j} \rVert_2^2\le \lVert z^{l_{j}} - z^*\rVert_2^2,
\end{equation}
where $\alpha_2 = \frac{2\beta}{4\beta-\mu}<1$.  

For FBFS, consider $z^*\in \text{zer}\;(\mathcal{P}+\mathcal{Q})$, due to \cite[Theorem 25.10]{bauschke2011convex}, we have
\begin{equation}
\label{eq:distbd2fbf}
    \lVert z^{l_{j}+1} - z^*\rVert_2^2 \le \lVert z^{l_{j}} - z^*\rVert_2^2 -  (1-\mu^2L^2)\lVert g^{l_j}_{\text{FBS}} \rVert_2^2\le \lVert z^{l_{j}} - z^*\rVert_2^2.
\end{equation}

Combine \Eqn{distbd2dr}, \Eqn{distbd2fb}, \Eqn{distbd2dy}, \Eqn{distbd2fbf} with \Eqn{distbd}. Then for any $k\ge 0$ and any splitting method, we have
\begin{align}
\begin{split}\label{eq:boundedness}
\lVert z^k - z^* \rVert_2 &\le \lVert z^0 - z^* \rVert_2 + \left(1+\frac{2}{\eta}\right) D\lVert g^0\rVert_2\sum_{i=0}^\infty (i+1)^{-(1+\epsilon)} 
\\ & = E < \infty.
\end{split}
\end{align}

Squaring both sides of \Eqn{distbd} and taking $w=z^*$, we have
\begin{align}
\begin{split}
\label{eq:distbdsq}
&\lVert z^{k_i+1} - z^*\rVert_2^2 - \lVert z^{k_i} - z^*\rVert_2^2 \\&\le 2\lVert z^{k_i} - z^*\rVert_2 \left(1+\frac{2}{\eta}\right)D\lVert g^{0} \rVert_2(i+1)^{-(1+\epsilon)}\\&+\left[\left(1+\frac{2}{\eta}\right)D\lVert g^{0}\rVert_2\right]^2(i+1)^{-2(1+\epsilon)}\\
&\le 2E \left(1+\frac{2}{\eta}\right)D\lVert g^{0} \rVert_2(i+1)^{-(1+\epsilon)} \\&+\left[\left(1+\frac{2}{\eta}\right)D\lVert g^{0}\rVert_2\right]^2(i+1)^{-2(1+\epsilon)},
\end{split}
\end{align}
where the second inequality is due to \Eqn{boundedness}. 

The bound in \Eqn{distbdsq} together with the bounds in \Eqn{distbd2dr}, \Eqn{distbd2fb}, \Eqn{distbd2dy}, and \Eqn{distbd2fbf} imply that the sequence $\{z^k\}$ is quasi-Fej\'er monotone with respect to a non-empty target set $C\subset \Real^{2p}$. For DRS, $C=\text{Fix} \; F_{\text{DRS}}$. For FBS, $C=\text{Fix} \; F_{\text{FBS}}=\text{zer}\;(\mathcal{P}+\mathcal{Q})$ (see  \cite[Proposition 25.1 (iv)]{bauschke2011convex}). For DYS, $C=\text{Fix} \; F_{\text{DYS}}$. This is because for any $z^*\in C$ we have
$$
\lVert z^{k+1} - z^*\rVert_2^2 \le \lVert z^{k} - z^*\rVert_2^2 + \epsilon^k,
$$
where
\begin{align*}
\epsilon^{k_i} &= 2E (1+\frac{2}{\eta})D\lVert g^{0} \rVert_2(i+1)^{-(1+\epsilon)}\\ & +((1+\frac{2}{\eta})D\lVert g^{0}\rVert_2)^2(i+1)^{-2(1+\epsilon)},
\end{align*}
and $\epsilon^{l_j}=0$. Since $\sum_{k=0}^\infty \epsilon^k<\infty$, the quasi-Fej\'er monotone property is satisfied.

We now prove that for DRS, either there are only finitely many $l_j$ or $\underset{j\rightarrow \infty}{\lim} \lVert g^{l_j}\rVert_2 = 0$. With an almost identical argument, we can show that this is also the case for FBS and DYS. The arguments for FBS, DYS will only differ in the sense that the negative constant before $\lVert g^{l_j} \rVert_2^2$ will be different for each one of them. For FBFS, we can similarly show that there are only finitely many $l_j$ or $\underset{k\rightarrow \infty}{\lim} \lVert g^{k}_{\text{FBS}} \rVert_2 = 0$ by replacing the term $\lVert g^{l_j} \rVert_2^2$ with $(1-\mu^2L^2)\lVert g^{l_j}_{\text{FBS}} \rVert_2^2$.

From \Eqn{distbd2dr}, we have
\begin{equation}
\label{eq:distbd2drdiff}
\lVert z^{l_{j}+1} - z^*\rVert_2^2 - \lVert z^{l_{j}} - z^*\rVert_2^2\le  -  \lVert g^{l_j} \rVert_2^2.
\end{equation}

Summing \Eqn{distbdsq} and \Eqn{distbd2drdiff} over all $k_i$ and $l_j$ for which $k_i\le k$ and $l_j \le k$ gives
\begin{align}
\begin{split}\label{eq:sqdiffsum}
& \lVert z^{k+1} - z^*\rVert_2^2 - \lVert z^{0} - z^*\rVert_2^2\\ & \le \sum_{i:k_i\le k}  2E \left(1+\frac{2}{\eta}\right)D\lVert g^{0} \rVert_2(i+1)^{-(1+\epsilon)}\\
& + \sum_{i:k_i\le k} \left[\left(1+\frac{2}{\eta}\right)D\lVert g^{0}\rVert_2\right]^2(i+1)^{-2(1+\epsilon)} 
 - \sum_{j:l_j\le k} \lVert g^{l_j} \rVert_2^2.
\end{split}
\end{align}
Adding $\lVert z^{0} - z^*\rVert_2^2$ to both sides of \eqref{eq:sqdiffsum} and taking $k\rightarrow \infty$ gives
\begin{align}
\begin{split}\label{eq:sqdiffsumlimit}
0 &\le \underset{k\rightarrow \infty}{\lowlim} \lVert z^{k+1} - z^*\rVert_2^2 \le \underset{k\rightarrow \infty}{\uplim} \lVert z^{k+1} - z^*\rVert_2^2 \\& \le  \sum_{i=0}^\infty  2E \left(1+\frac{2}{\eta}\right)D\lVert g^{0} \rVert_2(i+1)^{-(1+\epsilon)}\\
& + \sum_{i=0}^\infty \left[\left(1+\frac{2}{\eta}\right)D\lVert g^{0}\rVert_2\right]^2(i+1)^{-2(1+\epsilon)}\\
& - \sum_{l_j<\infty} \lVert g^{l_j} \rVert_2^2 + \lVert z^{0} - z^*\rVert_2^2.
\end{split}
\end{align}
Since $\sum_{i=0}^\infty  2E (1+\frac{2}{\eta})D\lVert g^{0} \rVert_2(i+1)^{-(1+\epsilon)}$ and $\sum_{i=0}^\infty \left[\left((1+\frac{2}{\eta}\right)D\lVert g^{0}\rVert_2\right]^2(i+1)^{-2(1+\epsilon)}$ are finite, the series $\lVert g^{l_j}\rVert_2^2$ must be summable. Thus, we must have only finitely many $l_j$ or $\underset{j\rightarrow \infty}{\lim} \lVert g^{l_j}\rVert_2 = 0$. We also have $\underset{i\rightarrow \infty}{\lim} \lVert g^{k_i}\rVert_2=0$ since $(i+1)^{-1-\epsilon}\rightarrow 0$. Thus, we have shown that $\underset{k\rightarrow \infty}{\lim} \lVert g^{k} \rVert_2 = 0$.

Now, for DRS, FBS, and DYS, due to the fact that $\underset{k\rightarrow \infty}{\lim} \lVert g^{k} \rVert_2 = 0$, for any convergent subsequence $\{z^{t_k}\}$ of $\{z^k\}$, i.e., $z^{t_k}	\rightarrow z^{**}$, we have
$$
F(z^{**})=F(\underset{k\rightarrow \infty}{\lim} z^{t_k}) = \underset{k\rightarrow \infty}{\lim} F(z^{t_k}) = \underset{k\rightarrow \infty}{\lim} z^{t_k} =  z^{**},
$$ 
where the second equality is due to to the continuity of the FPI and the third equality is due to the fact that $\underset{k\rightarrow \infty}{\lim} \lVert g^{k} \rVert_2 = 0$. For FBFS,  we similarly have $F_{\text{FBS}}(z^{**})=z^{**}$, which implies $z^{**} \in \text{Fix}\;F_{\text{FBS}} = \text{zer}\;(\mathcal{P}+\mathcal{Q})$. 

Finally, we invoke \Lem{quasi} to conclude that $\{z^k\}$ converges to a point $z^*$ in $C$. Notice that for DRS, $\text{zer}\;(\mathcal{P}+\mathcal{Q}) = \mathcal{J}_{\mu \mathcal{P}}(\text{Fix}\;F_{\text{DRS}})$ and for DYS, $\text{zer}\;(\mathcal{P}+\mathcal{Q}+\mathcal{R}) = \mathcal{J}_{\mu \mathcal{R}}(\text{Fix}\;F_{\text{DYS}})$. Thus, the optimal solutions for DRS and DYS are recovered by $\mathcal{J}_{\mu \mathcal{P}} z^*$ and $\mathcal{J}_{\mu \mathcal{R}} z^*$, respectively. 

\end{proof}

\subsection{A Pathological Example}\label{sec:patho}
\begin{figure}[htbp]
  \centering
  \includegraphics[width=0.5\textwidth]{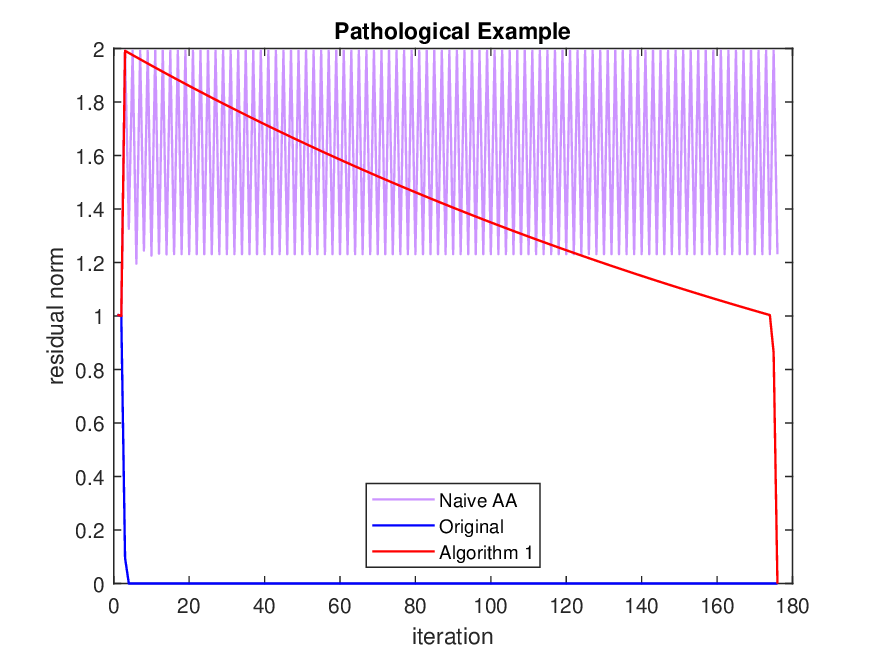}
  \caption{Residual norm plot of the different algorithms applied to the FPI \Eqn{pathfpi} starting from the point $x_0=2.1$.}
  \label{fig:resnormplotpatho}
\end{figure} 
We revisit a pathological example in \cite{mai2020anderson} where an unmodified AA scheme \Eqn{aa2} might fail to converge even if the original FPI is globally convergent. This clarifies the necessity of modifying the algorithm to ensure global convergence. Next, we demonstrate the convergence of our regularized and safeguarded AA algorithm \Alg{a2os} on this pathological example. This numerical demonstration empirically supports our theoretical result on the global convergence of the algorithm.

Consider a smooth convex function $f$ on $\Real$ whose gradient is 
$$
\nabla f(x) = 
\begin{cases}
\frac{x}{10}-24.9 & x\le 1 \\
25x & -1<x<1\\
\frac{x}{10}+24.9 & x\ge 1
\end{cases}.
$$
It is not hard to see that $f$ is $L$-Lipschitz with $L = 25$. We consider the following FPI
\begin{equation}\label{eq:pathfpi}
x^{k+1} = x^k - \frac{1}{25}\nabla f(x^k), 
\end{equation}
to iteratively search for the minimizer of $f$.  This FPI can be viewed as a special case of both FBS \Eqn{fbiteration} and DYS \Eqn{dyiteration}. For AA, we investigate two different parameter settings. In the first setting, we choose $M=1$, $\eta=0$, $D=\infty$, and $\epsilon=0$. Under these parameters, \Alg{a2os} reduces to the naive AA \Eqn{aa2}. In the second setting, we employ the default parameters outlined earlier, with the exception of setting $M=1$ to match the configuration of the naive AA and setting $D=1$ to enable the safeguard to activate earlier. Both algorithm settings are initialized with $x^0=2.1$.

The results shown in \Fig{resnormplotpatho} indicate that in the given example, the original FPI converges quickly, while the naive AA gets trapped in a periodic orbit. Specifically, the iterates cycle over the values $-249(\sqrt{5}-2)$, $249$, $249(\sqrt{5}-2)$, and $-249$, while the residual norms cycle over $1.992$ and $1.231$ after a few iterations. This cyclic behavior occurs whenever the initial iterate $x^0$ falls within the range of $[2.01,246.98]$ (as proven in \cite[Proposition 1]{mai2020anderson}). Our proposed algorithm \Alg{a2os}, initially follows a similar trajectory as the naive AA. However, at iteration 4, the safeguard check enables \Alg{a2os} to escape the cyclic trap. From iteration 4 to iteration 174, the decrease in residual norm is slow because the safeguard check consistently fails and the gradient of $f$ is relatively flat. At iteration 175, the algorithm enters the middle area, after which convergence occurs instantly. Both the original FPI and \Alg{a2os} yield a final solution of 0. Note that although our proposed algorithm is able to converge in this example, it is not faster than the original FPI. However, the value of our proposed algorithm lies in its ability to accelerate convergence in the vast majority of cases, as demonstrated in the \Sec{numexp}.

%{\appendices
%\section*{Proof of the First Zonklar Equation}
%Appendix one text goes here.
% You can choose not to have a title for an appendix if you want by leaving the argument blank
%\section*{Proof of the Second Zonklar Equation}
%Appendix two text goes here.}

\section{References}
\bibliographystyle{IEEEtran}
\bibliography{main}

\end{document}